\documentclass[a4paper,11pt,reqno]{article}

\usepackage[utf8]{inputenc}
\usepackage[T1]{fontenc}
\usepackage{lmodern}
\usepackage[english]{babel}
\usepackage{microtype}

\usepackage{amsmath,amssymb,amsfonts,amsthm}
\usepackage{mathtools,accents}
\usepackage{mathrsfs}
\usepackage{aliascnt}
\usepackage{braket}
\usepackage{bm}

\usepackage[a4paper,margin=3cm]{geometry}
\usepackage[citecolor=blue,colorlinks]{hyperref}

\usepackage{enumerate}
\usepackage{xcolor}


\makeatletter
\g@addto@macro\@floatboxreset\centering
\makeatother


\makeatletter
\def\newaliasedtheorem#1[#2]#3{
  \newaliascnt{#1@alt}{#2}
  \newtheorem{#1}[#1@alt]{#3}
  \expandafter\newcommand\csname #1@altname\endcsname{#3}
}
\makeatother

\numberwithin{equation}{section}

\newtheoremstyle{slanted}{\topsep}{\topsep}{\slshape}{}{\bfseries}{.}{.5em}{}

\theoremstyle{plain}
\newtheorem{theorem}{Theorem}[section]
\newaliasedtheorem{proposition}[theorem]{Proposition}
\newaliasedtheorem{lemma}[theorem]{Lemma}
\newaliasedtheorem{corollary}[theorem]{Corollary}
\newaliasedtheorem{counterexample}[theorem]{Counterexample}

\theoremstyle{definition}
\newaliasedtheorem{definition}[theorem]{Definition}
\newaliasedtheorem{question}[theorem]{Question}
\newaliasedtheorem{conjecture}[theorem]{Conjecture}

\theoremstyle{remark}
\newaliasedtheorem{remark}[theorem]{Remark}
\newaliasedtheorem{example}[theorem]{Example}


\newcommand{\setR}{\mathbb{R}}

\newcommand{\eps}{\varepsilon}

\let\altphi\phi
\let\phi\varphi
\let\varphi\altphi
\let\altphi\undefined




\newcommand{\di}{\mathop{}\!\mathrm{d}}

\newcommand{\bs}{{\rm bs}}

\newcommand{\res}{\mathop{\hbox{\vrule height 7pt width .5pt depth 0pt
\vrule height .5pt width 6pt depth 0pt}}\nolimits}

\DeclareMathOperator{\supp}{supp}

\newcommand{\Ch}{{\sf Ch}}

\DeclareMathOperator{\Lip}{Lip}
\DeclareMathOperator{\Lipb}{Lip_b}


\newcommand{\leb}{\mathscr{L}}

\newcommand{\Prob}{\mathscr{P}}


\newcommand{\dist}{\mathsf{d}}

\newcommand{\meas}{\mathfrak{m}}
\newcommand{\dimnew}{{\rm dim}_{\dist,\meas}}


\DeclareMathOperator{\CD}{CD}
\DeclareMathOperator{\RCD}{RCD}
\newfont{\tmpf}{cmsy10 scaled 2500}

\begin{document}
\title{Short-time behavior of the heat kernel and\\
Weyl's law on $\RCD^*(K,N)$ spaces}
\author{Luigi Ambrosio
\thanks{Scuola Normale Superiore, \url{luigi.ambrosio@sns.it}} \and
Shouhei Honda
\thanks{Tohoku University, \url{shonda@m.tohoku.ac.jp}} \and
David Tewodrose
\thanks{Scuola Normale Superiore,  \url{david.tewodrose@sns.it}}} \maketitle

\begin{abstract} In this paper, we prove pointwise convergence of heat kernels for mGH-convergent sequences of $\RCD^{*}(K,N)$-spaces. We obtain as a corollary results on the short-time behavior of the heat kernel in $\RCD^*(K,N)$-spaces. We use then these results 
to initiate the study of 
Weyl's law in the $\RCD$ setting.
\end{abstract}

\tableofcontents

\section{Introduction}
More than a century ago, H. Weyl gave in \cite{Weyl} a nice description of the asymptotic behavior of the eigenvalues of the Laplacian on bounded domains of $\mathbb{R}^n$ for $n=2,\,3$ (his result was later on extended for any integer $n \ge 2$). More precisely, if $\Omega \subset \mathbb{R}^n$ is a bounded domain, it is well-known that the spectrum of (minus) the Dirichlet Laplacian on $\Omega$ is a sequence of positive numbers $(\lambda_{i})_{i \in \mathbb{N}^{*}}$ such that $\lambda_{i} \rightarrow +\infty$ as $i\to +\infty$. Weyl proved that
$$ \lim_{\lambda\to+\infty}\frac{N(\lambda)}{\lambda^{n/2}} = \frac{\omega_{n}}{(2\pi)^n} \mathcal{L}^n(\Omega)$$
where $N(\lambda) =\sharp \{i \in \mathbb{N}:\ \lambda_i\le \lambda\}$ (the eigenvalues being counted with multiplicity), 
$\omega_n$ is the volume of the $n$-dimensional euclidean unit ball, and $\mathcal{L}^n(\Omega)$ is the $n$-dimensional Lebesgue measure of $\Omega$. This result is known as Weyl's law. It has been widely used to tackle some physical problems, and several refinements were found after Weyl's first article. For a complete overview of the history of Weyl's law and its refinements, we refer to \cite{ArendtNittkaPeterSteiner}.

Among the possible generalizations of Weyl's law, one can replace the bounded domain $\Omega \subset \mathbb{R}^n$ by a $n$-dimensional compact closed manifold. The Laplacian is then replaced by the Laplace-Beltrami operator of the manifold, and the term $\mathcal{L}^n(\Omega)$ is replaced by $\mathcal{H}^n(M)$, where $\mathcal{H}^n$ denotes the $n$-dimensional Hausdorff measure. It has been proved by B. Levitan in \cite{Levitan} that Weyl's law is still true in that case.

Another generalization concerns compact Riemannian manifolds $(M,g)$ equipped with the distance $\dist$ induced by the metric $g$ and a measure with positive smooth density $e^{-f}$ with respect to the volume measure $\mathcal{H}^n$. For such spaces
$(M,\dist,e^{-f}\mathcal{H}^n)$, called weighted Riemannian manifolds,  one has
\begin{equation}\label{eq:intro weyl2}
\lim_{\lambda \to +\infty}\frac{N_{(M, \dist, e^{-f}\mathcal{H}^n)}(\lambda)}{\lambda^{n/2}}=\frac{\omega_n}{(2\pi)^n}\mathcal{H}^n(M),
\end{equation}
holds, where $N_{(M, \dist, e^{-f}\mathcal{H}^n)}(\lambda)$ denotes the counting function of the (weighted) Laplacian $\Delta^f:=\Delta -\langle \nabla f, \nabla \cdot \rangle$ of $(M, \dist, e^{-f}\mathcal{H}^n)$. This result is a consequence of \cite{Hormander}.
We stress that in the asymptotic behavior (\ref{eq:intro weyl2}) the information of the weight, $e^{-f}$, disappears (as we obtain by different means in Example~\ref{ex:weighted weyl}). This sounds surprising: the Hausdorff dimension is a purely metric notion, whereas the Laplace-Beltrami operator on weighted Riemannian manifolds and more generally on $\RCD^*(K, N)$-spaces does depend on the reference measure. 

In this paper we focus on infinitesimally Hilbertian metric measure spaces with Ricci curvature bounded from below, the so-called $\RCD$-spaces.  The curvature-dimension condition 
$\CD(K,N)$ was independently formulated in terms of optimal transport by Sturm in \cite{Sturm06} and Lott-Villani in \cite{LottVillani}. The $\CD$ condition extends to a non-smooth setting the Riemannian notion of Ricci curvature bounded below. Indeed, for given $K \in \mathbb{R}$ and $N \in [1,+\infty]$, a Riemannian manifold satisfies the $\CD(K,N)$ condition if and only if it has Ricci curvature bounded below by $K$ and dimension bounded above by $N$. The $\CD$ condition is also stable under Gromov-Hausdorff convergence: any metric measure space obtained as a measured Gromov-Hausdorff limit of a sequence of Riemannian manifolds with Ricci curvature bounded below by $K$ and dimension bounded above by $N$ satisfies the $\CD(K,N)$ condition. Such limit spaces are called Ricci limit spaces in the sequel. 

In more recent times, two main requirements were added to that theory, namely, the $\CD^{*}$ condition introduced in \cite{BacherSturm} and the infinitesimal Hilbertianity introduced in \cite{AmbrosioGigliSavare14}, giving rise to the study of the so-called $\RCD$ (resp. $\RCD^{*}$) spaces which are by definition infinitesimally Hilbertian spaces satisfying the $\CD$ (resp. $\CD^{*}$) condition. All these notions are stable under Gromov-Hausdorff convergence. See also the papers \cite{AmbrosioGigliSavare15}, \cite{ErbarKuwadaSturm}, 
\cite{AmbrosioMondinoSavare}, where the $\RCD(K,\infty)/\RCD^*(K,N)$ theories
have been proved to be essentially equivalent to the Bakry-Emery theory. The latter, based on diffusion operators and 
Bochner's inequality, for weighted Riemannian manifolds $(M, \dist, e^{-f}\mathcal{H}^n)$ reads as follows:
\begin{equation}\label{eq:RCD smooth metric measure space}
\mathrm{Ric}_M+\mathrm{Hess}_f -\frac{\nabla f \otimes \nabla f}{N-n} \ge Kg_M.
\end{equation}

Let $(X, \dist, \meas)$ be a compact $\RCD^* (K, N)$-space. The main result of this paper is a sharp criterion 
(see \eqref{eq:intro weyl cri} below) for the validity of Weyl's law on $(X, \dist, \meas)$. The authors do not know whether 
there exist $\RCD^*(K, N)$-spaces which do not satisfy this criterion, since all known examples satisfy it.

As observed in (\ref{eq:intro weyl2}) it is expected that the asymptotic behavior of the counting function $N_{(X, \dist, \meas)}(\lambda)$ is 
not related to the reference measure $\meas$, but rather to the Hausdorff measure $\mathcal{H}^\ell$, where $\ell$ is the Hausdorff dimension of $(X, \dist)$. 

In order to introduce the precise statement of our criterion let us recall Mondino-Naber's result \cite{MondinoNaber}:
$$
\meas (X \setminus \bigcup_{i=1}^{[N]}\mathcal{R}_i )=0,
$$
where $[N]$ is the integer part of $N$ and $\mathcal{R}_i$ is the $i$-dimensional regular set of $(X, \dist, \meas)$. Recall that the $i$-dimensional regular set of $(X,\dist)$ (Definition~\ref{def:regular}) is the set of points of $X$ admitting a unique tangent cone isometric to $(\mathbb{R}^{i},d_{eucl},c_i\mathcal{H}^i)$. More recently, building on \cite{DePhilippisRindler}, more than one group of authors have shown that 
$\meas$-almost all of $\mathcal{R}_{i}$ can be covered 
by bi-Lipschitz charts defined in subsets of $\mathbb{R}^{i}$, and that the restriction of the reference measure to each $\mathcal{R}_{i}$ is absolutely continuous w.r.t. $\mathcal{H}^i$ (Proposition~\ref{thm:propRk} and \cite{KellMondino}, \cite{DePhilippisMarcheseRindler}, \cite{GigliPasqualetto}).

Let us define the maximal regular dimension $\dimnew(X)$ of $(X,\dist,\meas)$ as the largest integer $k$
such that $\meas(\mathcal{R}_k)>0$. We are now in a position to introduce our main result. 
We prove first in Theorem~\ref{thm:RN} that, setting $k=\dimnew(X)$, the limit
$$
\lim_{r \to 0^+}\frac{r^k}{\meas (B_r(x))}
$$
exists and is finite for $\meas$-a.e. $x \in X$. Then our criterion (Theorem~\ref{thm:weyl}) can be stated as follows:
\begin{equation}\label{eq:intro weyl cri}
\lim_{r \to 0^+}\int_X\frac{r^k}{\meas (B_r(x))} \dist \meas =\int_X\lim_{r \to 0^+}\frac{r^k}{\meas (B_r(x))} \dist \meas <\infty
\end{equation}
if and only if
\begin{equation}\label{eq:intro weyl law}
\lim_{\lambda \to +\infty}\frac{N_{(X, \dist, \meas)}(\lambda)}{\lambda^{k/2}}=\frac{\omega_k}{(2\pi)^k}\mathcal{H}^k({\cal R}_k^*)<\infty,
\end{equation}
where $\mathcal{R}_k^*\subset\mathcal{R}_k$ denotes a suitable reduced regular set (defined in Theorem~\ref{thm:RN})
such that $\meas(\mathcal{R}_k\setminus\mathcal{R}_k^*)=0$ and $\meas\res\mathcal{R}_k^*$ and $\mathcal{H}^k\res
\mathcal{R}_k^*$ are mutually absolutely continuous (in particular $\mathcal{H}^k(\mathcal{R}_k^*)>0$).

Note that with this criterion, the asymptotic behavior of $N_{(X, \dist, \meas)}(\lambda)$ (including the growth order) is determined by the sole top-dimensional reduced regular set.
As typical examples, thanks to the dominated convergence theorem, the criterion is automatically satisfied when $k=N$, or when the metric measure structure is Ahlfors regular. As a consequence, we  obtain a new result, namely Weyl's law for finite dimensional compact Alexandrov spaces (Corollaries \ref{cor:weyls law} and \ref{cor:weyl alex}). We can also provide examples (see \eqref{eq:intro exa} below) such that 
$k<N$ and Ahlfors regularity fails.

On the other hand, it is worth pointing out that from the viewpoint of $\RCD$-theory the least number $N_{{\rm min}}$ such that 
$(X,\dist,\meas)\in\RCD^*(K,N_{{\rm min}})$ for some $K\in\mathbb{R}$ 
might be naturally regarded as another dimension of $(X, \dist, \meas)$ (indeed, $N_{{\rm min}}=n$ for weighted Riemannian
manifolds $(M, \dist, e^{-f}\mathcal{H}^n)$).
However, in general $N_{{\rm min}}$ is not equal to the Hausdorff dimension of $(X, \dist )$ and need not be 
related to the asymptotic behaviour of $N_{(X, \dist, \meas)}(\lambda)$, as the following example shows:
for $N \in (1,+\infty)$, let us consider the metric measure space
\begin{equation}\label{eq:intro exa}
(X, \dist, \meas):=([0, \pi], \dist_{[0, \pi]},\sin^{N-1}t\dist t).
\end{equation}
It is known that $(X, \dist, \meas)$ is a $\RCD^*(N-1, N)$-space (see for instance \cite{CavallettiMondino15a}). 
Moreover, since $\meas (B_r(x)) \sim r$ for $x \in (0, \pi)$ and $\meas (B_r(x)) \sim r^N$ for $x \in \{0, \pi\}$ as $r \to 0^+$, for
this metric measure structure one has
$N_{{\rm min}}=N$, because Bishop-Gromov inequality for $\RCD^*(\hat{K}, \hat{N})$-spaces implies 
a positive lower bound on $\meas (B_r(x))/r^{\hat{N}}>0$.  

It turns out that our criterion can be applied to $(X, \dist, \meas)$, as \eqref{eq:intro weyl cri} 
holds by the dominated convergence theorem. Thus
\begin{equation}\label{eq:exa}
\lim_{\lambda \to +\infty}\frac{N_{(X, \dist, \meas)}(\lambda)}{\lambda^{1/2}}=\frac{\omega_1}{2\pi}\mathcal{H}^1((0, \pi))=1
\end{equation}
and the number $N$ does not appear in (\ref{eq:exa}).
Note that (\ref{eq:exa}) is also new and that the same asymptotic behavior in (\ref{eq:exa}) for $N=1$ (i.e. the metric measure space is $([0, \pi], \dist_{[0, \pi]}, \mathcal{H}^1))$ is well-known as Weyl's law on $[0, \pi]$ associated with homogeneous Neumann boundary conditions. See Example~\ref{rem:example1} for more details.\\

Let us conclude this introduction by pointing out that our technique is based on a pointwise convergence of heat kernels for Gromov-Hausdorff converging sequences of $\RCD^{*}(K,N)$ spaces (Theorem~\ref{thm:heat kernel}) which is a generalization of Ding's Riemannian results \cite{Ding}. A main advantage of our approach is the use of regularity theory of heat flows that avoids some technical
difficulties  of spectral theory. As a corollary, we obtain a precise short-time diagonal estimate of the heat kernel $p(x,x,t)$ on the regular sets of a compact $\RCD^{*}(K,N)$ space $(X,\dist,\meas)$. In fact, \eqref{eq:intro weyl cri} allows to turn this estimate into a trace formula (see the proof of Theorem~\ref{thm:weyl})
$$ \lim_{t\to 0^+} (4\pi t)^{k/2}\int_{X} p(x,x,t) \dist \meas(x) = \mathcal{H}^k (\mathcal{R}_{k}^{*}),$$ where $k=\dim_{\dist, \meas}(X)$, leading naturally to Weyl's law.\\

The paper is organized as follows. In Section 2 we recall the notions on $\RCD^{*}(K,N)$ spaces that shall be used in the sequel and give some useful lemmas. Section 3 begins with the treatment of weak/strong $L^2$-convergence for sequences of functions defined on converging 
$\RCD^{*}(K,N)$ spaces. Then, using the Gaussian estimates \eqref{eq:gaussian} of the heat kernel on $\RCD^{*}(K,N)$-spaces established in \cite{JiangLiZhang}, we prove the pointwise convergence of heat kernels defined on a converging sequence of $\RCD^{*}(K,N)$ spaces, and we deduce from this fact the short-time estimate of the heat kernel on regular sets of $\RCD^{*}(K,N)$ spaces. Section 4 is devoted to the study of Weyl's law, first in a weak form (Theorem~\ref{thm:weakweyl}) and then strengthening the assumptions in a stronger and more classical form (Theorem~\ref{thm:weyl}). The rest of this section is dedicated to examples and applications (especially to compact Alexandrov spaces). 

After completing our paper we learned of an independent work \cite{ZhangZhu2} by H-C. Zhang and X-P. Zhu on Weyl's law
in the setting of $\RCD^*(K,N)$ spaces. The paper is based on a local analysis, along the same lines of \cite{Ding}, and provides 
sufficient conditions for the validity of Weyl's law, different from our sharp criterion of Theorem~\ref{thm:weyl}. 

\smallskip
\noindent
{\bf Acknowledgement.} The first and third author acknowledge the support of the PRIN2015 MIUR Project ``Calcolo delle Variazioni''. 
The second author acknowledges the support of the JSPS Program for Advancing 
Strategic International Networks to Accelerate the Circulation of Talented Researchers, the
Grant-in-Aid for Young Scientists (B) 16K17585 and the Scuola Normale Superiore for warm hospitality.

\section{Notation and preliminaries about $\RCD^*(K,N)$ spaces}

Let us recall basic facts about Sobolev spaces and heat flow in metric measure spaces 
$(X,\dist,\meas)$, see \cite{AmbrosioGigliSavare13} and \cite{Gigli1} for a more systematic treatment of this topic. The
so-called Cheeger energy
$\Ch:L^2(X,\meas)\to [0,+\infty]$ is the convex and $L^2(X,\meas)$-lower semicontinuous functional defined as follows:
\begin{equation}\label{eq:defchp}
\Ch(f):=\inf\left\{\liminf_{n\to\infty}\frac 12\int_X{\rm Lip}_a^2(f_n)\di\meas:\ \text{$f_n\in\Lipb(X)\cap L^2(X,\meas)$, $\|f_n-f\|_2\to 0$}\right\}. 
\end{equation}
The original definition in \cite{Cheeger} involves generalized upper gradients of $f_n$ in place of their asymptotic Lipschitz constant
$${\rm Lip}_a(f):=\lim_{r\to 0^+}{\rm Lip}(f,B_r(x)),$$ 
but many other pseudo gradients (upper gradients, or the slope ${\rm lip}(f)\leq{\rm Lip}_a(f)$, which is a particular upper gradient) can be used and all of them lead to the same definition, see \cite{AmbrosioColomboDiMarino} and the discussion in \cite[Remark~5.12]{AmbrosioGigliSavare13}). 

The Sobolev space $H^{1,2}(X,\dist,\meas)$ is simply defined as the finiteness domain of $\Ch$. When endowed with the norm
$$
\|f\|_{H^{1,2}}:=\left(\|f\|_{L^2(X,\meas)}^2+{\color{blue}2}\Ch(f)\right)^{1/2}
$$
this space is Banach, and reflexive if $(X,\dist)$ is doubling (see \cite{AmbrosioColomboDiMarino}).  
The Sobolev space is Hilbert if $\Ch$ is a quadratic form. We say that a metric measure space
$(X,\dist,\meas)$ is infinitesimally Hilbertian if $\Ch$ is a quadratic form.
  
By looking at minimal relaxed slopes and by a polarization procedure, one can then define a {\it Carr\'e du champ}
$$
\Gamma:H^{1,2}(X,\dist,\meas)\times H^{1,2}(X,\dist,\meas)\rightarrow L^1(X,\meas)
$$
playing in this abstract theory the role of the scalar product between gradients. In infinitesimally Hilbertian metric measure
spaces the $\Gamma$ operator
satisfies all natural symmetry, bilinearity, locality and chain rule properties, and provides integral representation to
$\Ch$: $2\Ch(f)=\int_X \Gamma(f,f)\,\dist\meas$ for all $f\in H^{1,2}(X,\dist,\meas)$. We can also define a densely
defined operator $\Delta:D(\Delta)\to L^2(X,\meas)$ by
$$
f\in D(\Delta)\,\,\,\Longleftrightarrow\,\,\,
\exists h:=\Delta f\in L^2(X,\meas)\,\,\text{s.t. } \int_X hg\dist\meas=-\int_X \Gamma(f,g)\dist\meas\,\,\,\forall g\in H^{1,2}(X,\dist,\meas).
$$

Another object canonically associated to $\Ch$ and then to the metric measure structure is the heat flow $h_t$, defined as the
$L^2(X,\meas)$ gradient flow of $\Ch$; even in general metric measure structures one can use the Brezis-Komura theory of 
gradient flows of lower semicontinuous functionals in Hilbert spaces to provide existence and uniqueness of this gradient flow. 
In the special case of infinitesimally Hilbertian 
metric measure spaces, this provides a linear, continuous and self-adjoint contraction
semigroup $h_t$ in $L^2(X,\meas)$ with the Markov property, characterized by:
$t\mapsto h_t f$ is locally absolutely continuous in $(0,+\infty)$ with values in $L^2(X,\meas)$ and
$$
\frac{d}{dt}h_t f=\Delta h_t f\quad\text{for $\leb^1$-a.e. $t\in (0,+\infty)$}
$$
for all $f\in L^2(X,\meas)$. Thanks to the Markov property, this semigroup has a unique $L^p$ continuous extension
from $L^2\cap L^p$ to $L^p$, $1\leq p<+\infty$, and by duality one defines also the $w^*$-continuous 
extension to $L^\infty(X,\meas)$.

In order to introduce the class of $\RCD(K,\infty)$ and $\RCD^*(K,N)$ metric measure spaces we follow the
$\Gamma$-calculus point of view, based on Bochner's inequality, because this is the point of view more relevant
in the proof of heat kernel estimates, Li-Yau inequalities, etc. The equivalence with the Lagrangian point
of view, based on the theory of optimal transport is discussed in \cite{AmbrosioGigliSavare15} (in the case $N=\infty$) and in
\cite{ErbarKuwadaSturm}, \cite{AmbrosioMondinoSavare} (in the case $N<\infty$). The latter point of view does not play a 
role in this paper, but it plays indeed a key role in the proof of the results we need, mainly taken from \cite{GigliMondinoSavare13} 
and \cite{MondinoNaber}. 

\begin{definition} [$\RCD$ spaces]\label{def:RCDspaces} Let $(X,\dist,\meas)$ be a metric measure space, with $(X,\dist)$ complete, satisfying
\begin{equation}\label{eq:Grygorian}
\meas\bigl(B_r(\bar x)\bigr)\leq c_1 e^{c_2r^2}\qquad\forall r>0
\end{equation}
for some $c_1,\,c_2>0$ and $\bar x\in X$ and the so-called Sobolev to Lipschitz property:  any  
$f\in H^{1,2}(X,\dist,\meas)\cal L^\infty(X,\meas)$ with $\Gamma(f)\leq 1$ $\meas$-a.e. in $X$ 
has a representative in $\tilde{f}\in\Lipb(X)$, with $\Lip(\tilde f)\leq 1$.

For $K\in\setR$, 
we say that $(X,\dist,\meas)$ is a $\RCD(K,\infty)$ metric measure space if, 
for all $f\in H^{1,2}(X,\dist,\meas)\cap D(\Delta)$ with $\Delta f\in H^{1,2}(X,\dist,\meas)$,
 Bochner's inequality
$$
\frac 12\Delta\Gamma(f)\geq \Gamma(f,\Delta f)+K\Gamma(f) 
$$
holds in the weak form
$$
\frac 12\int \Gamma(f)\Delta\phi\dist\meas\geq
\int\phi(\Gamma(f,\Delta f)+K\Gamma(f))\dist\meas 
\quad\forall \phi\in D(\Delta)\,\,\text{with $\phi\geq 0$,}\,\,\Delta\phi\in L^\infty(X,\meas).
$$

Analogously, for $K\in\setR$ and $N>0$, we say that $(X,\dist,\meas)$ is a $\RCD^*(K,N)$ metric 
measure space if, for all $f\in H^{1,2}(X,\dist,\meas)\cap D(\Delta)$ with $\Delta f\in H^{1,2}(X,\dist,\meas)$, 
Bochner's inequality
$$
\frac 12\Delta\Gamma(f)\geq \Gamma(f,\Delta f)+\frac 1N(\Delta f)^2+K\Gamma(f) 
$$
holds in the weak form
$$
\frac 12\int \Gamma(f)\Delta\phi\dist\meas\geq
\int\phi(\Gamma(f,\Delta f)+\frac 1N(\Delta f)^2+K\Gamma(f))\dist\meas 
$$
for all $\phi\in D(\Delta)$ with $\phi\geq 0$ and $\Delta\phi\in L^\infty(X,\meas)$.
\end{definition}

The assumption \eqref{eq:Grygorian} is needed to ensure stochastic completeness, namely the property $h_t1=1$.
For our purposes it will be convenient not to add the assumption that $X=\supp\meas$, made in some other papers
on this subject.
Neverthless, it is obvious that $(X,\dist,\meas)$ is $\RCD(K,\infty)$ (resp. $\RCD^*(K,N)$) if and only if
$(X,\dist,\supp\meas)$ is $\RCD(K,\infty)$ (resp. $\RCD^*(K,N)$).

For $\RCD(K,\infty)$ spaces it is proved in \cite{AmbrosioGigliSavare14} that the dual semigroup $\tilde h_t$, acting on 
the space $\Prob_2(X)$ of probability measures with finite quadratic moments, is $K$-contractive and maps for all $t>0$ 
$\Prob_2(X)$ into measures absolutely continuous w.r.t. $\meas$, with finite logarithmic entropy. Setting then 
$$
\tilde h_t\delta_x=p(x,\cdot,t)\meas\qquad x\in X,\,\,t>0
$$
this provides a version of the heat kernel $p(x,y,t)$ in this class of spaces
(defined for any $x$ in $\supp\meas$, up to a $\meas$-negligible set of points $y$), so that
$$
h_tf(x)=\int_X p(x,y,t)f(y)\dist\meas\qquad\forall f\in L^2(X,\meas).
$$
In $\RCD^*(K,N)$ spaces with $N<\infty$,
thanks to additional properties satisfied by the metric measure structure,
one can find a version of $p$ continuous in $\supp\meas\times\supp\meas\times (0,+\infty)$, 
as illustrated in the next section.

\begin{definition}[Rectifiable sets]
Let $(X,d)$ be a metric space and let $k\geq 1$ be an integer. We say that $S\subset X$ is
countably $k$-rectifiable if there exist at most countably many sets $B_i\subset\setR^k$ 
and Lipschitz maps $f_i:B_i\to X$ such that $S\subset\cup_i f_i(B_i)$.\\
For a nonnegative Borel measure $\mu$ in $X$ (not necessarily $\sigma$-finite), we say
that $S$ is $(\mu,k)$-rectifiable if there exists a countably $k$-rectifiable set $S'\subset S$
such that $\mu^*(S\setminus S')=0$, namely $S\setminus S'$ is contained in a
$\mu$-negligible Borel set.
\end{definition}

In the next proposition we recall some basic differentiation properties of measures.

\begin{proposition} If $\mu$ is a locally finite and nonnegative Borel measure in $X$ and $S\subset X$ is a Borel set, one has
\begin{equation}\label{eq:density1}
\mu(S)=0\quad\Longrightarrow\quad\mu(B_r(x))=o(r^k)\,\,\,\text{for $\mathcal{H}^k$-a.e. $x\in S$.}
\end{equation}
In addition, 
\begin{equation}\label{eq:density3}
\mu(S)=0,\,\,S\subset\{x:\ \limsup_{r\to 0^+}\frac{\mu(B_r(x))}{r^k}>0\}\quad\Longrightarrow\quad
\mathcal{H}^k(S)=0.
\end{equation}
Finally, if $\mu=f\mathcal{H}^k\res S$ with $S$ countably $k$-rectifiable, one has
\begin{equation}\label{eq:densityrett}
\lim_{r\to 0^+}\frac{\mu(B_r(x))}{\omega_kr^k}=f(x)\qquad\text{for $\mathcal{H}^k$-a.e. $x\in S$.}
\end{equation}
\end{proposition}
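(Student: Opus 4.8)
The plan is to isolate a single density comparison estimate and derive all three statements from it.

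\textbf{Core lemma.} First I would prove: if $t>0$ and $E\subset X$ is Borel with $\limsup_{r\to 0^+}\mu(B_r(x))/r^k>t$ for every $x\in E$, then $\mathcal{H}^k(E)\leq \omega_k 5^k t^{-1}\mu(E)$. Fix $\delta,\eps>0$, use outer regularity of the locally finite (hence Radon, $X$ being complete and separable) measure $\mu$ to choose an open $U\supset E$ with $\mu(U)\leq\mu(E)+\eps$, and for each $x\in E$ select arbitrarily small radii $r<\delta$ with $B_r(x)\subset U$ and $\mu(B_r(x))>tr^k$. This is a fine cover of $E$; the $5r$-covering (Vitali) lemma extracts a countable disjoint subfamily $\{B_{r_i}(x_i)\}$ with $E\subset\bigcup_i B_{5r_i}(x_i)$, so that
$$
\mathcal{H}^k_\delta(E)\leq\sum_i\omega_k(5r_i)^k\leq \frac{\omega_k5^k}{t}\sum_i\mu(B_{r_i}(x_i))\leq\frac{\omega_k5^k}{t}\mu(U)\leq\frac{\omega_k5^k}{t}\bigl(\mu(E)+\eps\bigr),
$$
where disjointness and $B_{r_i}(x_i)\subset U$ give the third inequality; letting $\eps\to0$ then $\delta\to0$ yields the claim. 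The only metric-space subtlety is the replacement of Besicovitch by the $5r$-covering lemma, which costs the harmless dimensional constant $5^k$.

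\textbf{Deducing \eqref{eq:density3} and \eqref{eq:density1}.} For \eqref{eq:density3}, decompose $S=\bigcup_{m\geq1}S_m$ with $S_m:=\{x\in S:\ \limsup_{r\to0^+}\mu(B_r(x))/r^k>1/m\}$; the core lemma gives $\mathcal{H}^k(S_m)\leq\omega_k5^k m\,\mu(S_m)\leq\omega_k5^k m\,\mu(S)=0$, whence $\mathcal{H}^k(S)=0$. Statement \eqref{eq:density1} is then immediate: under the hypothesis $\mu(S)=0$, the exceptional set $A:=\{x\in S:\ \limsup_{r\to0^+}\mu(B_r(x))/r^k>0\}$ satisfies $\mu(A)\leq\mu(S)=0$ and $A\subset\{\limsup_{r\to0^+}\mu(B_r(x))/r^k>0\}$, so \eqref{eq:density3} forces $\mathcal{H}^k(A)=0$, i.e.\ $\mu(B_r(x))=o(r^k)$ for $\mathcal{H}^k$-a.e.\ $x\in S$.

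\textbf{Proving \eqref{eq:densityrett}.} For the rectifiable case I would combine two ingredients. The first is the density theorem of Kirchheim for countably $k$-rectifiable metric sets, namely $\lim_{r\to0^+}\mathcal{H}^k(S\cap B_r(x))/(\omega_kr^k)=1$ for $\mathcal{H}^k$-a.e.\ $x\in S$. The second is a Lebesgue-point property of $f$ with respect to $\mathcal{H}^k\res S$: at $\mathcal{H}^k$-a.e.\ $x$ one has $\mathcal{H}^k(S\cap B_r(x))^{-1}\int_{S\cap B_r(x)}\abs{f-f(x)}\di\mathcal{H}^k\to0$. Writing
$$
\frac{\mu(B_r(x))}{\omega_kr^k}=f(x)\,\frac{\mathcal{H}^k(S\cap B_r(x))}{\omega_kr^k}+\frac{1}{\omega_kr^k}\int_{S\cap B_r(x)}\bigl(f-f(x)\bigr)\di\mathcal{H}^k,
$$
the first term tends to $f(x)$ by density $1$, while the second is dominated by $\mathcal{H}^k(S\cap B_r(x))/(\omega_kr^k)$ times the Lebesgue-point quotient and hence vanishes. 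The main obstacle is precisely this Lebesgue differentiation step, since Besicovitch covering is unavailable in a general metric space; I would resolve it by observing that the density-$1$ property makes $\mathcal{H}^k\res S$ asymptotically doubling at $\mathcal{H}^k$-a.e.\ point, which is exactly the condition under which the differentiation theorem holds, or equivalently by transporting the computation to $\mathbb{R}^k$ through the bi-Lipschitz charts provided by the rectifiable structure and invoking the Euclidean differentiation theorem together with the metric area formula. Controlling $f$ near a.e.\ point in the abstract setting is the only genuinely delicate point; everything else is covering-lemma bookkeeping.
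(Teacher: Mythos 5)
Your treatment of \eqref{eq:density1} and \eqref{eq:density3} is correct, and in fact more self-contained than the paper, which simply cites \cite[2.10.19]{Federer} and \cite[Theorem~2.4.3]{AmbrosioTilli}: replacing Besicovitch by the $5r$-covering lemma is exactly the right move in a metric space, and the resulting constant $\omega_k5^k t^{-1}$ is harmless since only $\mathcal{H}^k$-null conclusions are claimed. (Two trivial bookkeeping points: the enlarged balls $B_{5r_i}(x_i)$ have diameter up to $10r_i$, so your sum bounds $\mathcal{H}^k_{10\delta}(E)$ rather than $\mathcal{H}^k_\delta(E)$, which changes nothing; and outer regularity should be invoked only when $\mu(E)<+\infty$, the other case being trivial.)

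The proof of \eqref{eq:densityrett}, however, has a genuine gap: both of your ingredients are assertions about the measure $\mathcal{H}^k\res S$, and both require that measure to be locally finite, whereas the hypothesis only makes $\mu=f\mathcal{H}^k\res S$ locally finite; countable $k$-rectifiability gives $\sigma$-finiteness of $\mathcal{H}^k\res S$, not local finiteness. When $\mathcal{H}^k\res S$ is locally infinite, Kirchheim's density theorem genuinely fails for $S$: take $k=1$, $S$ the union of a circle $C$ (where $f\equiv 1$) and, at each scale $2^{-n}$, bundles of $n$ tiny circles attached near every point of a $2^{-n}$-net of $C$, with $f$ so small on the bundles that $\mu$ stays locally finite. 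Then $\limsup_{r\to 0^+}\mathcal{H}^1(S\cap B_r(x))/r=+\infty$ at \emph{every} $x\in C$, the Lebesgue-point property of $f$ w.r.t.\ $\mathcal{H}^1\res S$ fails there as well, and the first term $f(x)\,\mathcal{H}^k(S\cap B_r(x))/(\omega_kr^k)$ of your decomposition blows up along subsequences --- even though the conclusion \eqref{eq:densityrett} is still true, because $f$ is correspondingly small where $S$ is heavy. Note also that your proposed repair via asymptotic doubling is circular in this regime: asymptotic doubling of $\mathcal{H}^k\res S$ is deduced from the density-$1$ property, which is precisely what fails. This is not a pedantic case: the paper applies \eqref{eq:densityrett} in Theorem~\ref{thm:RN} to $\meas_k=f\mathcal{H}^k\res\mathcal{R}_k^*$ knowing only that $\mathcal{R}_k^*$ is $\sigma$-finite w.r.t.\ $\mathcal{H}^k$, and its proof explicitly separates Kirchheim's locally finite case from ``the general case'', handled by a comparison argument --- the step your proof omits.

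The good news is that the missing step can be supplied with tools you have already established. For $j\geq 1$ set $S_j:=\{x\in S:\ f(x)>1/j\}$; then $\mathcal{H}^k\res S_j\leq j\mu$ is locally finite, $S_j$ is countably $k$-rectifiable, and your two-ingredient argument applies verbatim to $f\mathcal{H}^k\res S_j$, giving
$$
\lim_{r\to 0^+}\frac{(f\mathcal{H}^k\res S_j)(B_r(x))}{\omega_kr^k}=f(x)\qquad\text{for $\mathcal{H}^k$-a.e. $x\in S_j$.}
$$
The remainder $\mu\res\{f\leq 1/j\}$ is locally finite and vanishes on $S_j$, so your own \eqref{eq:density1} gives $\mu\res\{f\leq 1/j\}(B_r(x))=o(r^k)$ for $\mathcal{H}^k$-a.e. $x\in S_j$; adding the two pieces yields \eqref{eq:densityrett} on $S_j$, hence on $\{f>0\}=\bigcup_jS_j$. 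Finally, on $\{f=0\}$ one has $\mu(\{f=0\})=0$, so \eqref{eq:density1} applied with this set in place of $S$ gives $\mu(B_r(x))=o(r^k)$ there, i.e.\ the limit $0=f(x)$. With this truncation inserted, your proof is complete and matches the paper's intended route (Kirchheim's case plus comparison).
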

\begin{proof} The proof of \eqref{eq:density1} and \eqref{eq:density3} can be found for instance
in \cite[2.10.19]{Federer} in a much more general context. See also \cite[Theorem~2.4.3]{AmbrosioTilli} for more specific statements
and proofs. The proof of \eqref{eq:densityrett} is given in \cite{Kirchheim} when $\mu=\mathcal{H}^k\res S$, with
$S$ countably $k$-rectifiable and having locally finite $\mathcal{H}^k$-measure (the proof uses the fact that 
for any $\eps>0$ we can cover $\mathcal{H}^k$-almost all of $S$ by sets $S_i$ which are biLipschitz
deformations, with biLipschitz constants smaller than $1+\eps$, of $(\setR^i,\|\cdot\|_i)$, for suitable
norms $\|\cdot \|_i$). In the general case a simple comparison argument gives the result.
\end{proof}

We conclude this section with two auxiliary results.

\begin{lemma}\label{lem:dct}
Let $f_i, \,g_i,\, f,\, g \in L^1(X, \meas)$.
Assume that $f_i, \,g_i \to f,\, g$ $\meas$-a.e. respectively, that $|f_i|\le g_i$ $\meas$-a.e., 
and that $\lim_{i \to \infty}\|g_i\|_{L^1}=\|g\|_{L^1}$.
Then $f_i \to f$ in $L^1(X, \meas)$. 
\end{lemma}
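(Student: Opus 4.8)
The plan is to run the standard ``generalized dominated convergence'' argument, applying Fatou's lemma to a cleverly chosen nonnegative sequence rather than to $|f_i-f|$ directly. The point is that the majorant here is not fixed but merely convergent, so the classical theorem does not apply verbatim; the convergence of the $L^1$-norms of the $g_i$ is precisely what compensates for this.

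First I would record two immediate consequences of the hypotheses. Since $|f_i|\le g_i$ forces $g_i\ge 0$ $\meas$-a.e., passing to the $\meas$-a.e.\ limit gives $g\ge 0$ and $|f|\le g$ $\meas$-a.e.; in particular $\|g\|_{L^1}=\int_X g\di\meas$. Next, the pointwise bound $|f_i-f|\le|f_i|+|f|\le g_i+g$ shows that $g_i+g-|f_i-f|\ge 0$ $\meas$-a.e., so this sequence is a legitimate input for Fatou's lemma.

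The core step is then to apply Fatou's lemma to $h_i:=g_i+g-|f_i-f|\ge 0$. Because $g_i\to g$ and $f_i\to f$ $\meas$-a.e., we have $h_i\to 2g$ $\meas$-a.e., so the left-hand side of Fatou is $\int_X 2g\di\meas=2\|g\|_{L^1}$. On the right-hand side I would split $\int_X h_i\di\meas=\int_X g_i\di\meas+\|g\|_{L^1}-\int_X|f_i-f|\di\meas$ and invoke the hypothesis $\int_X g_i\di\meas\to\|g\|_{L^1}$ together with the elementary identity $\liminf_i(a_i-b_i)=\|g\|_{L^1}-\limsup_i b_i$ valid when $a_i\to\|g\|_{L^1}$. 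This converts Fatou's inequality into $2\|g\|_{L^1}\le 2\|g\|_{L^1}-\limsup_i\int_X|f_i-f|\di\meas$, whence $\limsup_i\int_X|f_i-f|\di\meas\le 0$ and therefore $f_i\to f$ in $L^1(X,\meas)$.

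I do not expect a genuine obstacle here: the only delicate bookkeeping is converting the $\liminf$ of the sum into a $\limsup$ of $\int_X|f_i-f|\di\meas$, which is exactly where the assumption $\int_X g_i\di\meas\to\int_X g\di\meas$ is used to pull the $g_i$ term out cleanly (without it, one would only obtain an inequality and could not conclude). Everything else is routine, and the Fatou trick applied to $g_i+g-|f_i-f|$ is what makes the argument work with a non-constant dominating sequence.
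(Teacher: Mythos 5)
Your proposal is correct and follows essentially the same route as the paper: the paper also applies Fatou's lemma to $h_i := g_i + g - \abs{f_i - f} \ge 0$, identifies the left-hand side as $2\norm{g}_{L^1}$ and the right-hand side as $2\norm{g}_{L^1} - \limsup_i \norm{f_i - f}_{L^1}$, and concludes $\limsup_i \norm{f_i - f}_{L^1} = 0$. Your additional remarks (nonnegativity of $g_i$ and $g$, the bound $\abs{f} \le g$) only make explicit what the paper leaves implicit.
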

\begin{proof} Obviously $|f|\leq g$ $\meas$-a.e. 
Applying Fatou's lemma for $h_i:= g_i+g-|f_i-f| \ge 0$ yields
$$
\int_X\liminf_{i \to \infty}h_i\dist \meas \le \liminf_{i \to \infty}\int_Xh_i\dist \meas.
$$
Then by assumption the left hand side is equal to $2\|g\|_{L^1}$, 
and the right hand side is equal to $2\|g\|_{L^1}-\limsup_i\|f_i-f\|_{L^1}$. It follows that 
$\limsup_i\|f_i-f\|_{L^1}=0$, which completes the proof.
\end{proof}

The proof of the next classical result can be found, for instance, in \cite[Sec.~XIII.5, Theorem~2]{Feller}.

\begin{theorem}[Karamata's Tauberian theorem]\label{thm:karamata}
Let $\nu$ be a nonnegative and locally finite measure in $[0,+\infty)$ and set
$$
\hat\nu(t):=\int_{[0,+\infty)} e^{-\lambda t}\dist\nu(\lambda)\qquad t>0.
$$
Then, for all $\gamma>0$ and $a\in [0,+\infty)$ one has
$$
\lim_{t\to 0^+}t^\gamma\hat\nu(t)=a\qquad\Longleftrightarrow
\qquad\lim_{\lambda\to +\infty}\frac{\nu([0,\lambda])}{\lambda^\gamma}=
\frac{a}{\Gamma(\gamma+1)}.
$$
In particular, if $\gamma=k/2$ with $k$ integer, the limit in the right hand side can be written as
$a\omega_k/\pi^{k/2}$. 
\end{theorem}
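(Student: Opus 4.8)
The plan is to prove the two implications separately, since they are of quite different character: the implication ``$\Leftarrow$'' is Abelian (soft), whereas ``$\Rightarrow$'' is genuinely Tauberian (the hard direction). Throughout I write $U(\lambda):=\nu([0,\lambda])$ for the right-continuous cumulative function, so that $\hat\nu(t)=\int_{[0,+\infty)}e^{-\lambda t}\,dU(\lambda)$, and I set $c:=a/\Gamma(\gamma+1)$.

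For the Abelian direction, assume $U(\lambda)/\lambda^\gamma\to c$ as $\lambda\to+\infty$. First I would integrate by parts to write $\hat\nu(t)=t\int_0^\infty e^{-\lambda t}U(\lambda)\,d\lambda$, the boundary term $e^{-bt}U(b)$ vanishing as $b\to+\infty$ because $U$ grows only polynomially. Substituting $\lambda=u/t$ then gives $t^\gamma\hat\nu(t)=\int_0^\infty e^{-u}\,t^\gamma U(u/t)\,du$. For each fixed $u>0$ the integrand converges to $c\,u^\gamma e^{-u}$, and since $U$ is nondecreasing and locally finite with $U(\lambda)\le C(1+\lambda^\gamma)$, one has the uniform bound $t^\gamma U(u/t)\le C(1+u^\gamma)$ for $t\le 1$. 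Dominated convergence then yields $t^\gamma\hat\nu(t)\to c\int_0^\infty u^\gamma e^{-u}\,du=c\,\Gamma(\gamma+1)=a$.

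For the Tauberian direction the key device is a rescaling combined with the continuity theorem for Laplace transforms. Assuming $t^\gamma\hat\nu(t)\to a$, for each $x>0$ I introduce the rescaled measure $\nu_x$ defined by $\int f\,d\nu_x=x^{-\gamma}\int f(\lambda/x)\,d\nu(\lambda)$ for compactly supported continuous $f$; its Laplace transform is $\widehat{\nu_x}(s)=x^{-\gamma}\hat\nu(s/x)$, which for each fixed $s>0$ converges to $a s^{-\gamma}$ as $x\to+\infty$ by hypothesis. Now $a s^{-\gamma}$ is exactly the Laplace transform of the measure $\nu_\infty$ with $d\nu_\infty=\frac{a}{\Gamma(\gamma)}\lambda^{\gamma-1}\,d\lambda$, whose cumulative function is $\frac{a}{\Gamma(\gamma+1)}\lambda^\gamma$. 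If I can conclude $\nu_x\to\nu_\infty$ vaguely, then evaluating cumulative functions at the continuity point $\lambda=1$ gives $x^{-\gamma}\nu([0,x])=\nu_x([0,1])\to\nu_\infty([0,1])=a/\Gamma(\gamma+1)$, which is precisely the claim.

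The hard part is therefore the continuity theorem: pointwise convergence of Laplace transforms forces vague convergence of the measures. I would establish this in three steps. First, local uniform boundedness: from $\widehat{\nu_x}(s)\ge e^{-s\lambda}\nu_x([0,\lambda])$ one gets $\nu_x([0,\lambda])\le e^{s\lambda}\widehat{\nu_x}(s)$, bounded in $x$ for each fixed $\lambda$, so Helly's selection theorem produces vaguely convergent subsequences. Second, identification of any vague limit $\mu$: the obstacle here is that $e^{-s\lambda}$ is not compactly supported, so I would truncate at $R$, pass to the limit on $[0,R]$ using vague convergence, and control the tail uniformly via $\int_R^\infty e^{-s\lambda}\,d\nu_x\le e^{-(s-s')R}\widehat{\nu_x}(s')$ for a fixed $s'\in(0,s)$, letting $R\to+\infty$; this gives $\hat\mu(s)=a s^{-\gamma}$ for all $s>0$. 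Third, uniqueness of the Laplace transform on $[0,+\infty)$, which follows from the Stone--Weierstrass theorem applied to polynomials in $e^{-\lambda}$, forces $\mu=\nu_\infty$; since every subsequential limit equals $\nu_\infty$, the whole family converges vaguely. Finally, the concluding ``in particular'' is a one-line computation: for $\gamma=k/2$ the identity $\omega_k=\pi^{k/2}/\Gamma(k/2+1)$ gives $a/\Gamma(\gamma+1)=a\omega_k/\pi^{k/2}$.
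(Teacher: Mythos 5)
Your proof is correct, and its overall skeleton coincides with the paper's appendix: the Abelian direction via the identity $\hat\nu(t)=t\int_0^\infty e^{-ty}\nu([0,y])\di y$ (the paper derives it by Cavalieri's formula, you by integration by parts with a boundary term killed by polynomial growth) followed by dominated convergence, and the Tauberian direction via the rescaled measures $\nu_x(A)=x^{-\gamma}\nu(xA)$, identification of the limit measure $\tfrac{a}{\Gamma(\gamma)}\lambda^{\gamma-1}\di\lambda$, Stone--Weierstrass density of polynomials in $e^{-\lambda}$, and evaluation of cumulative functions at the boundary point $\lambda=1$, which is null for the limit measure. The one genuine divergence is the mechanism producing weak convergence of $\nu_x$. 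The paper never invokes compactness: it tests the exponentially weighted measures $e^{-\lambda}\di\nu_t$ directly against the monomials $e^{-k\lambda}$ --- each such integral equals $t^{\gamma}\hat\nu((k+1)t)\to a/(k+1)^{\gamma}$ straight from the hypothesis --- and then upgrades to all of $C_0([0,+\infty))$ by Stone--Weierstrass combined with the uniform mass bound, so convergence of the full family is proved directly. You instead run the general continuity theorem for Laplace transforms: Helly selection from the local uniform bounds $\nu_x([0,\lambda])\le e^{s\lambda}\widehat{\nu_x}(s)$, identification of every subsequential vague limit via truncation at $R$ together with the tail estimate $\int_R^\infty e^{-s\lambda}\di\nu_x\le e^{-(s-s')R}\widehat{\nu_x}(s')$, and Stone--Weierstrass recast as a uniqueness statement for Laplace transforms, whence all subsequential limits agree and the whole family converges. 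Your route is more modular --- it isolates a continuity theorem applicable whenever Laplace transforms converge pointwise --- at the price of a selection argument and a truncation step that the paper's direct computation on a dense subalgebra avoids; note also that your rescaling tacitly requires $\gamma>0$ (the density $\lambda^{\gamma-1}$ is not locally integrable at $0$ for $\gamma=0$), which is harmless here since the statement assumes $\gamma>0$, while the paper's appendix version covers $\gamma=0$ separately by monotone convergence. The concluding identity $a/\Gamma(k/2+1)=a\omega_k/\pi^{k/2}$ is verified correctly in both.
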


\begin{remark}[One-sided versions]\label{rem:tauberian}
More generally we shall prove in the last section of the paper the so-called Abelian one-sided implications and inequalities:
\begin{equation}\label{eq:abe1}
\liminf_{t\to 0^+}t^\gamma\hat\nu(t)\geq \Gamma(\gamma+1)\liminf_{\lambda \to +\infty}\frac{\nu([0,\lambda])}{\lambda^\gamma},
\end{equation}
\begin{equation}\label{eq:abe2}
\limsup_{\lambda\to +\infty}\frac{\nu([0,\lambda])}{\lambda^\gamma}<+\infty\quad\Longrightarrow\quad
\limsup_{t\to 0^+}t^\gamma\hat\nu(t)\leq\Gamma(\gamma+1)
\limsup_{\lambda \to +\infty}\frac{\nu([0,\lambda])}{\lambda^\gamma}
\end{equation}
as well as the so-called Tauberian one-sided implications and inequalities
\begin{equation}\label{eq:tau1}
\limsup_{\lambda\to +\infty}\frac{\nu([0,\lambda])}{\lambda^\gamma}\leq 
e\limsup_{t\to 0^+}t^\gamma\hat\nu(t),
\end{equation}
\begin{equation}\label{eq:tau2}
\liminf_{t\to 0^+}t^\gamma\hat\nu(t)>0,\,\,\limsup_{t\to 0^+}t^\gamma\hat\nu(t)<+\infty\qquad\Longrightarrow\qquad
\liminf_{\lambda \to +\infty}\frac{\nu([0,\lambda])}{\lambda^\gamma}>0.
\end{equation}
\end{remark}

Notice that \eqref{eq:tau2} is not quantitative, and requires both bounds on the $\liminf$
and the $\limsup$, see Remark~\ref{rem:moments} for an additional discussion.

\section{Pointwise convergence of heat kernels}

From now on, $K\in\mathbb{R}$ and $N \in [1,+\infty)$. Let us fix a pointed measured Gromov-Hausdorff (mGH for short in the sequel) convergent sequence $(X_i, \dist_i, x_i, \meas_i) \stackrel{mGH}{\to} (X, \dist, x, \meas)$ of $\RCD^*(K, N)$-spaces. This means that there exist sequences of positive numbers $\epsilon_i \to 0$, $R_i \uparrow \infty$, and of Borel maps $\phi_i:B_{R_i}(x_i) \to X$ such that:
\begin{itemize}
\item[(a)] $|\dist_i(x, y)-\dist (\phi_i(x), \phi_i(y))|<\epsilon_i$ for any $i$ and all $x,\, y \in B_{R_i}(x_i)$, so that
$ B_{R_i -\epsilon_i}(\phi_i(x_i)) \subset B_{\epsilon_i}(\phi_i(B_{R_i}(x_i)))$;
\item[(b)] $\phi_i(x_i) \to x$ in $X$ as $i \to \infty$ (we denote it by $x_i \stackrel{GH}{\to} x$ for short);
\item[(c)] $(\phi_i)_{\sharp}\meas_i \stackrel{C_\bs(X)}{\rightharpoonup} \meas$.
\end{itemize}

In statement (c) we have denoted by $C_\bs(X)$ the space of continuous functions with bounded support, and by $ f_\sharp $
the push forward operator between measures induced by a Borel map $f$. We shall use this notation also in the sequel and we call
weak convergence the convergence in duality with $C_\bs(X)$.

Since $\meas_i$ are uniformly doubling (it follows directly from Bishop-Gromov inequality, known to be true even in the $\CD^*(K,N)$ case), the mGH-convergence is equivalent to the pointed measured Gromov (pmG for short) convergence introduced in \cite{GigliMondinoSavare13}. Recall that ``$(X_{i}, \dist_i, x_i, \meas_i)$ pmG-converges to $(X, \dist, x, \meas)$'' means that there exist a doubling and complete metric space 
$\mathbb{X}$ and isometric embeddings $\psi_i:X_i \hookrightarrow \mathbb{X}$, $\psi: X \hookrightarrow \mathbb{X}$ such that $\psi_i(x_i) \to \psi (x)$ in $\mathbb{X}$ as $i \to \infty$ (we also write $x_i \stackrel{GH}{\to} x$ for short) and such that $(\psi_i)_{\sharp}\meas_i \stackrel{C_\bs(\mathbb{X})}{\rightharpoonup}  (\psi)_{\sharp}\meas$.
See \cite[Theorem 3.15]{GigliMondinoSavare13} for the proof of the equivalence.

Since all objects we are dealing with are invariant under isometric and measure-preserving embeddings, we identify in the sequel
$(X_i, \dist_i, x_i, \meas_i)$ with its image by $\psi_i$, i.e. $(X_i, \dist_i, x_i, \meas_i)=(\psi_i(X_i), \dist, \psi_i(x_i), (\psi_i)_{\sharp}\meas_i)$.
So, in the sequel the complete and doubling space $(X,\dist)$ will be fixed (playing the role of $\mathbb{X}$), 
and we denote by  $X_i\subset X$ the supports of the measures $\meas_i$, weakly convergent in $X$ to $\meas$. Because of this, we also
use the simpler notation $y_i\to y$ for $y_i \stackrel{GH}{\to} y$. We recall that complete and doubling spaces are proper (i.e.
bounded closed sets are compact), hence separable.

Under this notation let us recall the definition of $L^2$-strong/weak convergence of functions with respect to the mGH-convergence.
The following formulation is due to \cite{GigliMondinoSavare13} and \cite{AmbrosioStraTrevisan}, which fits the pmG-convergence well.
Other good formulations of $L^2$-convergence, in connection with mGH-convergence, can be found in \cite{Honda2, KuwaeShioya03}.
However in our setting these formulations are equivalent by the volume doubling condition (e.g. \cite[Proposition 3.3]{Honda7}). 

\begin{definition}[$L^2$-convergence of functions with respect to variable measures]\label{def:l2}
\item[]
\begin{enumerate}
\item{($L^2$-strong/weak convergence)} We say that $f_i \in L^2(X_i, \meas_i)$ \textit{$L^2$-weakly converge to $f \in L^2(X, \meas )$} 
if $\sup_i\|f_i\|_{L^2}<\infty$ and $f_i\meas_i \stackrel{C_\bs(X)}{\rightharpoonup} f\meas$.
Moreover, we say that $f_i\in L^2(X_i,\meas_i)$ \textit{$L^2$-strongly converge to $f\in L^2(X,\meas)$} if 
$f_i$ $L^2$-weakly converge to $f$ with $\limsup_{i \to \infty}\|f_i\|_{L^2}\le \|f\|_{L^2}$. 
\item{($L^2_{\mathrm{loc}}$-strong/weak convergence)}
We say that $f_i \in L^2_{\mathrm{loc}}(X_i, \meas_i)$ \textit{$L^2_{\mathrm{loc}}$-weakly (or strongly, respectively) converge to $f \in L^2_{\mathrm{loc}}(X, \meas )$} if $\zeta f_i$ $L^2_{\mathrm{loc}}$-weakly (or strongly, respectively) converge to $\zeta f$ for any 
$\zeta\in C_\bs(X)$.
\end{enumerate}
\end{definition}

\begin{proposition}\label{prop:equivalence}
Let $f_i \in C^0(X_i)$ and $f \in C^0(X)$, with $X$ proper and $$\sup_i\sup_{X_i\cap B_R(x_i)}|f_i|<+\infty
\qquad\forall R>0.$$ Assume that $\{f_i\}_i$ is locally equi-continuous, i.e. for any $\epsilon>0$ and any $R>0$ there exists $\delta>0$ 
independent of $i$ such that 
\begin{equation}\label{eq:equi cont}
(y, z) \in (X_i\cap B_R(x_i))^2\,\,\,\dist(y,z)<\delta\quad\Longrightarrow\quad |f_i(y)-f_i(z)|<\epsilon.
\end{equation}
Then the following are equivalent:
\begin{enumerate}
\item[(1)] $\lim\limits_{k\to \infty}f_{i(k)}(y_{i(k)}) = f(y)$ whenever $y\in\supp\meas$, $i(k)\to\infty$ and $y_{i(k)} \in X_{i(k)}\to y$,
\item[(2)] $f_i$ $L^2_{\mathrm{loc}}$-weakly converge to $f$,
\item[(3)] $f_i$ $L^2_{\mathrm{loc}}$-strongly converge to $f$.
\end{enumerate}
\end{proposition}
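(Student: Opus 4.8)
The plan is to close the loop $(1)\Rightarrow(3)\Rightarrow(2)\Rightarrow(1)$, noting that $(3)\Rightarrow(2)$ is immediate since $L^2$-strong convergence includes $L^2$-weak convergence by definition. The two substantial implications are $(1)\Rightarrow(3)$ and $(2)\Rightarrow(1)$; in both I would reduce to bounded supports by multiplying with an arbitrary $\zeta\in C_\bs(X)$, and I would freely use that the complete doubling space $X$ is proper, that $\limsup_i\meas_i(K)<\infty$ for every compact $K$, and the Portmanteau-type inequalities $\limsup_i\meas_i(C)\le\meas(C)$ for $C$ compact and $\liminf_i\meas_i(U)\ge\meas(U)$ for $U$ open, all of which follow from $\meas_i\rightharpoonup\meas$ together with uniform doubling.

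For $(1)\Rightarrow(3)$ the first and main step is to upgrade the pointwise convergence (1) to uniform convergence on compact sets: for every compact $K\subset X$ one has $\lim_i\sup\{|f_i(y)-f(y)|:\ y\in X_i\cap K\}=0$. I would prove this by contradiction, taking $y_k\in X_{i(k)}\cap K$ with $|f_{i(k)}(y_k)-f(y_k)|\ge\eps$; by properness $y_k\to y^*\in K$ up to a subsequence, by the pointed-Hausdorff convergence of the supports $X_i=\supp\meas_i$ to $\supp\meas$ one gets $y^*\in\supp\meas$, and then (1) gives $f_{i(k)}(y_k)\to f(y^*)$ while continuity of $f$ gives $f(y_k)\to f(y^*)$, a contradiction. (The inclusion $y^*\in\supp\meas$ is where uniform doubling and the Bishop--Gromov lower bound on $\meas_i(B_r(y_k))$ enter, preventing mass of the supports from disappearing in the limit.) Granting this, fix $\zeta\in C_\bs(X)$ and a continuous $g\colon\setR\to\setR$. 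Since $f\in C^0(X)$ and $\zeta$ has compact support, $h:=\zeta\,g(f)\in C_\bs(X)$, so weak convergence of measures yields $\int_{X_i}h\di\meas_i\to\int_X h\di\meas$; on the other hand $\int_{X_i}|\zeta\,g(f_i)-h|\di\meas_i\le\|\zeta\|_\infty\,\omega_g\bigl(\sup_{X_i\cap K}|f_i-f|\bigr)\,\meas_i(K)\to0$ by the uniform convergence and the equiboundedness of $\meas_i(K)$, where $\omega_g$ is the modulus of continuity of $g$ on a common bound of the $f_i$. Hence $\int_{X_i}\zeta\,g(f_i)\di\meas_i\to\int_X\zeta\,g(f)\di\meas$. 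Applying this identity with $\psi\zeta$ in place of $\zeta$ and $g=\mathrm{id}$, for arbitrary $\psi\in C_\bs(X)$, together with the obvious bound $\sup_i\|\zeta f_i\|_{L^2(\meas_i)}<\infty$, gives the $L^2$-weak convergence of $\zeta f_i$ to $\zeta f$; applying it with $\zeta^2$ in place of $\zeta$ and $g(t)=t^2$ gives $\|\zeta f_i\|_{L^2(\meas_i)}^2\to\|\zeta f\|_{L^2(\meas)}^2$, which upgrades weak to strong convergence.

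For $(2)\Rightarrow(1)$, fix $y\in\supp\meas$ and $y_i\in X_i\to y$, and let $\eps>0$. Local equicontinuity furnishes $\delta>0$, uniform in $i$, and I would test against a bump $\phi\in C_\bs(X)$ with $0\le\phi\le1$, $\phi\equiv1$ on $B_{\rho/2}(y)$ and $\supp\phi\subset B_\rho(y)$ for some $\rho<\delta/2$. On $\supp\phi\cap X_i$ every point is within $\delta$ of $y_i$ once $i$ is large, so equicontinuity gives $|f_i-f_i(y_i)|<\eps$ there, whence $\bigl|\int_{X_i}\phi f_i\di\meas_i-f_i(y_i)\int_{X_i}\phi\di\meas_i\bigr|\le\eps\,\meas_i(B_\rho(y))$. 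Letting $i\to\infty$ and using (2), namely $\int_{X_i}\phi f_i\di\meas_i\to\int_X\phi f\di\meas$ and $\int_{X_i}\phi\di\meas_i\to\int_X\phi\di\meas>0$, I obtain $\bigl|\limsup_i f_i(y_i)-A/B\bigr|\le\eps\,C'_\rho/B$ and likewise for the $\liminf$, where $A=\int_X\phi f\di\meas$, $B=\int_X\phi\di\meas$ and $C'_\rho=\limsup_i\meas_i(B_\rho(y))$. Since $A/B$ is an average of $f$ over $B_\rho(y)$ it tends to $f(y)$ as $\rho\to0$, and doubling bounds $C'_\rho/B\le\meas(\overline{B_\rho(y)})/\meas(B_{\rho/2}(y))$ uniformly in $\rho$; letting first $\rho\to0$ and then $\eps\to0$ forces $\liminf_i f_i(y_i)=\limsup_i f_i(y_i)=f(y)$.

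I expect the main obstacle to be the $(2)\Rightarrow(1)$ direction, where one must extract genuinely pointwise information from the sole $L^2$-weak (integral-against-$C_\bs$) hypothesis: the extraction works only because equicontinuity lets one replace $f_i$ by the constant $f_i(y_i)$ on a small ball, and because uniform doubling keeps the ratio $\meas_i(B_\rho(y))/\meas(B_{\rho/2}(y))$ bounded as $\rho\to0$, so that the equicontinuity error $\eps$ is not amplified upon normalizing. A secondary technical point, needed for $(1)\Rightarrow(3)$, is the pointed-Hausdorff convergence $\supp\meas_i\to\supp\meas$, which guarantees that an accumulation point $y^*$ of a sequence $y_k\in X_{i(k)}$ indeed lies in $\supp\meas$, so that hypothesis (1) may be invoked.
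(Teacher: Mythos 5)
Your proposal is correct, and for the direction $(2)\Rightarrow(1)$ it is essentially the paper's argument: the authors also test against a nonnegative bump $\zeta$ supported in $B_\delta(y)$ (normalized so that $\int\zeta\di\meas=1$) and use equicontinuity to sandwich $\int\zeta f_i\di\meas_i$ between $(f_i(y_i)\pm\eps)\int\zeta\di\meas_i$; your variant with the ratio $A/B$ and the doubling bound on $C'_\rho/B$ is just a slightly more explicit bookkeeping of the same idea. Where you genuinely diverge is $(1)\Rightarrow(3)$. The paper first proves $(1)\Rightarrow(2)$ by a superlevel-set argument: it shows $X_i\cap\{f_i\zeta>s\}$ is eventually contained in the $\eps$-neighbourhood of $\{f\zeta>s\}$ and, conversely, that open sets $A\Subset\{f\zeta>s\}$ are eventually contained in $\{f_i\zeta>s\}\cup(X\setminus X_i)$, then concludes via Cavalieri's formula and dominated convergence; the upgrade to $(3)$ is obtained by applying this implication to $g_i:=f_i^2$, which is locally equicontinuous thanks to the equiboundedness hypothesis. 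You instead upgrade $(1)$ to uniform convergence on compact sets by an Arzel\`a--Ascoli-style contradiction argument and then get $\int_{X_i}\zeta\,g(f_i)\di\meas_i\to\int_X\zeta\,g(f)\di\meas$ for every continuous $g$ in one stroke, producing weak convergence ($g=\mathrm{id}$) and norm convergence ($g(t)=t^2$) simultaneously. Each route has its merits: the paper's avoids any compactness extraction and works with one-sided measure estimates only, while yours is more economical (one lemma serves all compositions) and, importantly, makes explicit a point the paper leaves implicit --- both arguments need that an accumulation point $y^*$ of points $y_k\in X_{i(k)}=\supp\meas_{i(k)}$ lies in $\supp\meas$ before hypothesis $(1)$ can be invoked, which is exactly the pointed-Hausdorff convergence of supports guaranteed by the uniform (Bishop--Gromov) doubling; the paper's $\eps$-neighbourhood containment silently relies on the same fact, so your flagging it is a genuine improvement in rigor rather than an extra hypothesis.
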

\begin{proof}
We prove the implication from (1) to (3) and from (2) to (1), since the implication from (3) to (2) is trivial.

Assume that (2) holds, let $\epsilon>0$ and let $y_i \to y$. Take $\zeta$ nonnegative, with support contained
in $B_\delta(y)$ and with $\int\zeta\dist\meas=1$. Thanks to \eqref{eq:equi cont} and the continuity of $f$, for $\delta$ 
sufficiently small we have
$$
(f_i(y_i)-\epsilon)\int\zeta\dist\meas_i\leq \int \zeta f_i\dist\meas_i\leq (f_i(y_i)+\eps)\int\zeta\dist\meas_i
\qquad
f(y)-\epsilon\leq\int \zeta f\dist\meas\leq f(y)+\eps
$$
Since $\int\zeta f_i\dist\meas_i\to\int \zeta f\dist\meas$ and $\int\zeta\dist\meas_i\to\int \zeta\dist\meas=1$, 
from the arbitrariness of $\eps$ we obtain that $f_i(y_i)\to f(y)$. A similar argument, for arbitrary subsequences,
gives (1).

In order to prove the implication from (1) to (3) we prove the implication from (1) to (2). Assuming with no loss of
generality that $f_i$ and $f$ are nonnegative, for any $\zeta\in C_\bs(X)$ nonnegative, (1) and the compactness of the support of
$\zeta$ give that for any $\eps>0$ and any $s>0$ the set $X_i\cap \{f_i\zeta>s\}$ 
is contained in the $\epsilon$-neighbourhood of $\{f\zeta>s\}$ for $i$ large enough, so that
$$
\limsup_{i\to\infty}\meas_i(\{f_i\zeta>s\})\leq\meas(\{f\zeta\geq s\}).
$$
Analogously, any open set $A\Subset\{f\zeta>s\}$ is contained for $i$ large enough in 
the set $\{f_i\zeta>s\}\cup (X\setminus X_i)$, so that
$$
\liminf_{i\to\infty}\meas_i(\{f_i\zeta>s\})\geq\meas(\{f\zeta> s\}).
$$
Combining these two informations, Cavalieri's formula and the dominated convergence theorem provide 
$\int_X f_i\zeta\dist\meas_i\to\int_X f\zeta\dist\meas$ and then, since $\zeta$ is arbitrary, (2).

Now we can prove the implication from (1) to (3). Thanks to the equiboundedness assumption, the 
sequence $g_i:=f_i^2$ is locally equi-continuous as well and $g_i$ pointwise converge to $g:=|f|^2$ in the sense of (1), 
applying the implication from (1) to (2) for $g_i$ gives
$$
\lim_{i \to \infty}\int_{X_i}\zeta^2 f_i^2\dist \meas_i=\int_X\zeta^2 f^2\dist \meas\qquad\forall \zeta\in C_\bs(X),
$$
which yields (3).
\end{proof}

Let us recall the regularity of the heat kernel $p_{(X, \dist, \meas)}:=p(x, y, t)$ of a $\RCD^*(K, N)$-space $(X, \dist, \meas)$ we need, 
where $N \in [1, \infty)$ and $K\leq 0$. The general theory of Dirichlet forms \cite{Sturm96}, together 
with the doubling and Poincar\'e properties ensure that we can find a locally H\"older continuous representative of $p$
in $X\times X\times (0,+\infty)$, which satisfies Gaussian bounds. 
See \cite[Theorem 4]{Sturm94}, \cite[Proposition 2.3]{Sturm95}, \cite[Sections 3 and 4]{Sturm96}.

On $\RCD^*(K, N)$-spaces, finer properties of the heat kernel are known, as follows.
It was proven in \cite{JiangLiZhang} that for any $\epsilon \in (0, 1)$ there exist $C_i:=C_i(\epsilon, K, N)>1$ ($i=1,\,2)$ 
(depending only on $\epsilon,\, K,\, N$) such that 
\begin{equation}\label{eq:gaussian}
\frac{1}{C_1 \meas (B_{t^{1/2}}(x))}\exp \left(-\frac{\dist^2 (x, y)}{4(1-\epsilon)t}-C_2t \right) \le p(x, y, t) \le \frac{C_1}{\meas (B_{t^{1/2}}(x))}\exp \left( -\frac{\dist^2 (x, y)}{4(1+\epsilon)t}+C_2t \right)
\end{equation}
for all $x,\, y \in\supp\meas$ and any $t>0$. 
This, combined with the Li-Yau inequality \cite{GarofaloMondino, Jiang15} gives a gradient estimate:
\begin{equation}\label{eq:equi lip}
|\nabla_x p(x, y, t)|\le \frac{C_3}{t^{1/2}\meas (B_{t^{1/2}}(x))}\exp \left(-\frac{\dist^2(x, y)}{(4+\epsilon)t}+C_4t\right)
\end{equation}
for any $t>0$, $y\in\supp\meas$ and $\meas$-a.e. $x\in X$, 
where $C_i:=C_i(\epsilon, K, N)>1$ ($i=3,\,4$). In particular one obtains a quantitative local Lipschitz bound on $p$, 
i.e., for any $z \in X$, any $R>0$ and any $0<t_0\le t_1<\infty$ there exists $C:=C(K, N, R, t_0, t_1)>0$ such that
\begin{equation}\label{eq:lip}
|p(x, y, t)-p(\hat{x}, \hat{y}, t)| \le \frac{C}{\meas (B_{t_0^{1/2}}(z))} \dist ((x, y), (\hat{x}, \hat{y}))
\end{equation}
for all $x, \,y,\,\hat{x},\,\hat{y} \in B_R(z)\cap\supp\meas$ and any $t \in [t_0, t_1]$.
See \cite[Theorem~1.2, Corollary~1.2]{JiangLiZhang} (see also \cite{GarofaloMondino, MondinoNaber}).

The following is a generalization/refinement of Ding's result \cite[Theorems 2.6, 5.54 and 5.58]{Ding} from 
the Ricci limit setting to our setting, via a different approach.

\begin{theorem}[Pointwise convergence of heat kernels]\label{thm:heat kernel}
The heat kernels $p_i$ of $(X_i, \dist_i, \meas_i)$ satisfy
$$
\lim_{i \to \infty}p_i(x_i, y_i, t_i)=p(x, y, t)
$$
whenever $(x_i, y_i,t_i)\in X_i\times X_i\times (0,+\infty)\to (x,y,t)\in\supp\meas\times\supp\meas\times (0,+\infty)$.
\end{theorem}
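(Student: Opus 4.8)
The plan is to combine a compactness argument based on the quantitative regularity of the heat kernels with an identification of the subsequential limit via the convergence of the associated heat semigroups. I will work throughout with the embedded picture $X_i\subset X$ and recall that, $K,N$ being fixed, the measures $\meas_i$ are uniformly doubling by Bishop--Gromov.

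\emph{Step 1 (equi-regularity and compactness).} First I would fix $0<t_0\le t_1<\infty$ and a ball $B_R(x)$ and note that the constants in the Gaussian bound \eqref{eq:gaussian} and in the Lipschitz bound \eqref{eq:lip} depend only on $K$, $N$ and on the doubling/Poincar\'e data, which are uniform along the sequence. Since $x\in\supp\meas$, weak convergence $\meas_i\rightharpoonup\meas$ together with uniform doubling yields a positive lower bound for $\meas_i(B_{\sqrt t}(x_i))$, uniform in $i$ and in $t\in[t_0,t_1]$; hence \eqref{eq:gaussian} gives a uniform upper bound for $p_i$ on $(B_R(x)\cap\supp\meas_i)^2\times[t_0,t_1]$, while \eqref{eq:lip}, complemented by the joint H\"older continuity in $X_i\times X_i\times(0,\infty)$ coming from the general Dirichlet-form theory \cite{Sturm96} (again with uniform constants), shows that $\{p_i\}$ is locally equibounded and jointly equicontinuous on $\supp\meas\times\supp\meas\times(0,\infty)$. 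A diagonal Arzel\`a--Ascoli argument over a countable dense subset then shows that every subsequence admits a further subsequence along which $p_i$ converges, uniformly on compact subsets, to a continuous limit $\tilde p$.

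\emph{Step 2 (identification of the limit).} It remains to prove $\tilde p=p$. I would fix $t>0$, $x\in\supp\meas$ and $f\in\Cbs(X)$, and set $f_i:=f|_{X_i}$. Since $f$ and $f^2$ lie in $\Cbs(X)$, weak convergence $\meas_i\rightharpoonup\meas$ gives both $f_i\meas_i\rightharpoonup f\meas$ and $\|f_i\|_{L^2(\meas_i)}\to\|f\|_{L^2(\meas)}$, so $f_i\to f$ $L^2$-strongly. By the strong $L^2$-convergence of heat flows along pmG-convergent $\RCD^*(K,N)$ sequences, a consequence of the Mosco convergence of the Cheeger energies established in \cite{GigliMondinoSavare13} (see also \cite{AmbrosioStraTrevisan}), one has $h_t^i f_i\to h_t f$ $L^2$-strongly. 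The functions $h_t^i f_i$ are equibounded (by $h_t1\le1$) and, by \eqref{eq:lip}, locally equicontinuous, so Proposition~\ref{prop:equivalence} upgrades this to pointwise convergence and gives $h_t^i f_i(x_i)\to h_t f(x)$. On the other hand, writing $h_t^i f_i(x_i)=\int_{X_i}p_i(x_i,z,t)f(z)\di\meas_i(z)$ and using the locally uniform convergence $p_i(x_i,\cdot,t)\to\tilde p(x,\cdot,t)$ from Step 1, the boundedness of $\supp f$, and $\meas_i\rightharpoonup\meas$, I obtain $h_t^i f_i(x_i)\to\int_X\tilde p(x,z,t)f(z)\di\meas(z)$. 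Comparing the two limits,
$$
\int_X\tilde p(x,z,t)f(z)\di\meas(z)=h_tf(x)=\int_X p(x,z,t)f(z)\di\meas(z)\qquad\forall f\in\Cbs(X),
$$
so the continuous functions $\tilde p(x,\cdot,t)$ and $p(x,\cdot,t)$ agree on $\supp\meas$. As $\tilde p$ is thus independent of the subsequence and equals $p$, the full sequence converges and the theorem follows.

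\emph{Main obstacle.} The step I expect to be most delicate is the identification in Step 2: it rests on the strong $L^2$-convergence of the heat semigroups along the pmG-convergent sequence and on transferring this $L^2$-information to genuine pointwise convergence through Proposition~\ref{prop:equivalence}. A secondary technical point is the joint time-equicontinuity required for the compactness in Step 1, for which the spatial bound \eqref{eq:lip} must be complemented by time regularity from the Dirichlet-form theory. Finally, the uniform Gaussian decay in \eqref{eq:gaussian}, together with stochastic completeness $h_t1=1$ from \eqref{eq:Grygorian}, is what controls the tails whenever one runs the argument with non-compactly-supported test functions or, in the alternative self-contained route, through the Chapman--Kolmogorov decomposition $p_i(x_i,y_i,t_i)=\int_{X_i}p_i(x_i,z,t_i/2)\,p_i(z,y_i,t_i/2)\di\meas_i(z)$, which reduces the two-variable statement to the $L^2$-strong convergence of $z\mapsto p_i(x_i,z,t_i/2)$.
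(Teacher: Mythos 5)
Your proposal is correct, and it runs on the same engine as the paper's proof --- the $L^2$-strong convergence of the heat flows $h^i_tf\to h_tf$ from \cite[Theorem 6.11]{GigliMondinoSavare13}, the uniform bounds \eqref{eq:gaussian} and \eqref{eq:lip}, and Proposition~\ref{prop:equivalence} as the bridge between $L^2$ and pointwise convergence --- but the top-level architecture is genuinely different. The paper has no compactness step at all: after reducing to $t_i\equiv t$ by the parabolic rescaling $\dist\to (t/t_i)^{1/2}\dist$, it observes that the convergence $h^i_tf(y_i)\to h_tf(y)$ for all $f\in\Cbs(X)$ (obtained from Proposition~\ref{prop:equivalence} together with the Bakry--\'Emery equi-Lipschitz bound on $h^i_tf$) is \emph{exactly} the statement that $p_i(\cdot,y_i,t)$ converges $L^2_{\rm loc}$-weakly to the already-identified limit $p(\cdot,y,t)$, and then applies Proposition~\ref{prop:equivalence} a second time, in the first variable with the second variable moving, using the equi-Lipschitz bound \eqref{eq:lip}. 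You instead extract a subsequential limit $\tilde p$ by an Arzel\`a--Ascoli argument on varying domains in all three variables and identify it afterwards by testing against $\Cbs(X)$ functions. What this costs you is precisely the two extra ingredients you flag yourself: the varying-domain diagonal extraction, and joint time-equicontinuity with constants uniform along the sequence, which \eqref{eq:lip} alone does not provide and which you must import from the parabolic Harnack/H\"older theory of \cite{Sturm96} --- the uniformity does hold for fixed $K$, $N$ because the doubling and Poincar\'e data are uniform, so this is legitimate, just heavier. The paper's rescaling trick eliminates the time variable at the outset and with it any need for time regularity; your compactness step buys locally uniform (rather than merely weak) subsequential convergence, which you then spend when passing to the limit in $\int_{X_i}p_i(x_i,z,t)f(z)\di\meas_i(z)$, a step the paper handles directly through $L^2_{\rm loc}$-weak convergence. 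Both proofs are sound; the paper's is leaner because the identification of the limit precedes, and thereby replaces, compactness.
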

\begin{proof}
By rescaling $\dist \to (t/t_i)^{1/2}\dist$, without any loss of generality we can assume that $t_i \equiv t$.
Let $f\in C_\bs(X)$ and recall that, viewing $f$ as an element of $L^2\cap L^\infty(X_i,\meas_i)$, 
$h^i_tf$ $L^2$-strongly converge to $h_tf$ \cite[Theorem~6.11]{GigliMondinoSavare13}.  
By the Bakry-Emery estimate (see for instance \cite[Theorem~6.5]{AmbrosioGigliSavare14}, here
$I_0(t)=t$ and $I_S(t):=(e^{St}-1)/S$ for $S\neq 0$) 
$$
\sqrt{2I_{2K}(t)}{\rm Lip}(h_t f,\supp\meas)\leq\|f\|_{L^\infty(X,\meas)},
$$
valid in all $\RCD(K,\infty)$ spaces, we see that $h^i_tf$ are equi-Lipschitz on $X_i$.
Thus, applying Proposition~\ref{prop:equivalence} yields $h^i_tf(y_i)\to h_tf(y)$.

On the other hand, the Gaussian estimate \eqref{eq:gaussian} shows that $\sup_i\|p_i(\cdot, y_i, t)\|_{L^{\infty}}<\infty$.
By definition, since 
$$
h_tf(y_i)=\int_{X_i}p_i(z, y_i, t)f(z)\dist \meas_i(z),\qquad h_tf(y)=\int_Xp(z, y, t)f(z)\dist \meas (z),
$$
we see that $p_i(\cdot, y_i, t)$ $L^2_{\mathrm{loc}}$-weakly converge to $p(\cdot, y, t)$.
Moreover, since thanks to \eqref{eq:lip} the functions $p_i(\cdot, y_i, t)$ are locally equi-Lipschitz continuous, 
choosing any continuous extension of $p(\cdot,y,t)$ to the whole of $X$ and
applying Proposition~\ref{prop:equivalence} once more to $p_i(\cdot, y_i,t)$ we obtain $p_i(x_i, y_i,t)$ converge to
$p(x, y, t)$, which completes the proof.
\end{proof}

\begin{definition}[$k$-dimensional regular sets $\mathcal{R}_k$ and maximal dimension $\dimnew(X)$]\label{def:regular}
Recall that the $k$-dimensional regular set $\mathcal{R}_k$ of a $\RCD^*(K, N)$-space $(X, \dist, \meas)$ in the sense of Mondino-Naber \cite{MondinoNaber} is, by definition, the set of points $x \in\supp\meas$ such that 
$$
(X, r^{-1}\dist, \meas_r^x, x) \stackrel{mGH}{\to} (\mathbb{R}^k, \dist_{\mathbb{R}^k}, c_k\mathcal{H}^k, 0_k)
$$
as $r \to 0^+$,
where $c_k$ is the normalization constant such that $\int_{B_1(0_k)}(1-|x|) \dist (c_k\mathcal{H}^k)=1$, and
$$
\meas_r^x:=\left( \int_{B_r(x)}\left(1-\frac{\dist (x, \cdot)}{r}\right) \dist \meas \right)^{-1}\meas.
$$
We denote by $\dimnew(X)$ the largest integer $k$ such that $\mathcal{R}_k$ has positive $\meas$-measure. 
\end{definition}

By the Bishop-Gromov inequality, it is easily seen that $\mathcal{R}_k=\emptyset$ if $k>[N]$. It is conjectured that
$\RCD^*(K,N)$ spaces cannot be made of pieces of different dimensions, i.e. there exists only one
integer $k$ such that $\meas(\mathcal{R}_k)>0$. This property is known to be true for Ricci limit spaces,
see \cite{ColdingNaber}.

\begin{remark}\label{rem:regular}
By the $L^2_{\mathrm{loc}}$-strong convergence of $\dist (y_i, \cdot)$ to $\dist (y, \cdot )$ for any mGH-convergent sequence $(Y_i, \dist_i, \nu_i, y_i) \stackrel{GH}{\to} (Y, \dist, \nu, y)$ of $\RCD^* (K, N)$-spaces, it is easy to check that $x \in X$ is a $k$-dimensional regular point if and only if 
$$
\left(X, r^{-1}\dist, \frac{\meas}{\meas (B_r(x))}, x\right) \stackrel{mGH}{\to} \left(\mathbb{R}^k, \dist_{\mathbb{R}^k}, \frac{\mathcal{H}^k}{\omega_k}, 0_k\right),
$$
where recall that $\omega_k$ denotes the volume of a unit ball in the $k$-dimensional Euclidean space. \end{remark}

\begin{corollary}[Short time diagonal behavior of heat kernel on the regular set]\label{cor:short time}
Let $(X, \dist, \meas)$ be a $\RCD^*(K, N)$-space with $K\in\setR$ and $N \in (1, +\infty)$. Then
\begin{equation}\label{eq:shorttime}
\lim_{t \to 0^+}\meas (B_{t^{1/2}}(x)) p(x, x, t)=\frac{\omega_k}{(4\pi)^{k/2}}
\end{equation}
for any $k$-dimensional regular point $x$ of $(X, \dist, \meas)$.
\end{corollary}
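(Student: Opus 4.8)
The plan is to identify the left-hand side of \eqref{eq:shorttime} with the value at $(0_k,0_k,1)$ of the heat kernel of the Euclidean model, by combining the scaling behaviour of the heat kernel with the blow-up convergence furnished by the regularity of $x$, and then invoking Theorem~\ref{thm:heat kernel}. First I would record how the heat kernel transforms under a simultaneous rescaling of distance and reference measure. Given $\lambda>0$ and $c>0$, passing from $(X,\dist,\meas)$ to $(X,\lambda\dist,c\meas)$ multiplies the Carr\'e du champ by $\lambda^{-2}$ and the $L^2$ inner product by $c$, so that the associated Laplacian becomes $\lambda^{-2}\Delta$ and the heat semigroup at time $t$ coincides with the original semigroup at time $\lambda^{-2}t$. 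Comparing the two integral representations of the semigroup then yields
\begin{equation*}
p'(x,y,t)=c^{-1}\,p(x,y,\lambda^{-2}t),
\end{equation*}
where $p'$ is the heat kernel of the rescaled structure. Applying this with $\lambda=r^{-1}$ and $c=\meas(B_r(x))^{-1}$, the heat kernel $p_r$ of $(X,r^{-1}\dist,\meas/\meas(B_r(x)),x)$ satisfies $p_r(x,x,1)=\meas(B_r(x))\,p(x,x,r^2)$. Setting $r=t^{1/2}$, the quantity in \eqref{eq:shorttime} is exactly $p_r(x,x,1)$, so it suffices to prove $\lim_{r\to 0^+}p_r(x,x,1)=\omega_k/(4\pi)^{k/2}$.

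Next I would pass to the blow-up. By Remark~\ref{rem:regular}, since $x$ is a $k$-dimensional regular point, the rescaled spaces mGH-converge, as $r\to 0^+$, to the Euclidean model $(\setR^k,\dist_{\setR^k},\mathcal{H}^k/\omega_k,0_k)$, with base point $x$ converging to $0_k$. For $r\le 1$ these rescaled spaces all satisfy a common curvature-dimension bound $\RCD^*(\min(K,0),N)$ (dilating the distance by $r^{-1}$ turns the lower Ricci bound $K$ into $r^2K$, while leaving $N$ unchanged), and so does the limit; hence Theorem~\ref{thm:heat kernel} applies along any sequence $r_j\to 0^+$. Taking the constant sequences $x_j=y_j=x$ and $t_j=1$, it gives
\begin{equation*}
\lim_{j\to\infty}p_{r_j}(x,x,1)=\bar p(0_k,0_k,1),
\end{equation*}
where $\bar p$ denotes the heat kernel of the model space. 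Since this holds for every sequence $r_j\to 0^+$, the full limit as $r\to 0^+$ exists and equals $\bar p(0_k,0_k,1)$.

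Finally I would compute $\bar p(0_k,0_k,1)$. The heat kernel of $(\setR^k,\dist_{\setR^k},\mathcal{L}^k)$ is the Gaussian $(4\pi t)^{-k/2}\exp(-|x-y|^2/4t)$; the scaling identity of the first paragraph, applied with $\lambda=1$ and $c=\omega_k^{-1}$, converts it into the heat kernel for the measure $\mathcal{H}^k/\omega_k=\mathcal{L}^k/\omega_k$ by multiplying it by $\omega_k$. Evaluating at the diagonal point $0_k$ and at $t=1$ gives $\bar p(0_k,0_k,1)=\omega_k/(4\pi)^{k/2}$, as claimed. The step requiring the most care is the first one, namely the precise scaling law for the heat kernel under the joint dilation of $\dist$ and $\meas$; once this is established and matched with the normalisation $\mathcal{H}^k/\omega_k$ appearing in Remark~\ref{rem:regular}, the statement follows directly from the pointwise convergence of heat kernels.
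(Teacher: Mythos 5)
Your proposal is correct and coincides with the paper's own argument: both reduce \eqref{eq:shorttime} to the identity $\meas(B_{t^{1/2}}(x))\,p(x,x,t)=p^t(x,x,1)$ via the scaling law $\hat p(x,y,t)=C^{-1}p(x,y,r^2t)$ for $(X,r^{-1}\dist,C\meas)$ with $r=t^{1/2}$, $C=\meas(B_{t^{1/2}}(x))^{-1}$, then apply Theorem~\ref{thm:heat kernel} together with Remark~\ref{rem:regular} to pass to the Euclidean model $(\setR^k,\dist_{\setR^k},\mathcal{H}^k/\omega_k)$ and evaluate its kernel at $(0_k,0_k,1)$. Your additional care about the uniform lower bound $\RCD^*(\min(K,0),N)$ for the rescaled spaces and the reduction to arbitrary sequences $r_j\to 0^+$ merely makes explicit what the paper leaves implicit.
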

\begin{proof}
Let us recall that for any $r>0$ and any $C>0$ the heat kernel $\hat{p}(x, y, t)$ of the rescaled 
$\RCD^* (r^2K, N)$-space $(X, r^{-1}\dist, C\meas)$ is given by $\hat{p}(x, y, t)=C^{-1}p(x, y, r^2t)$.
Applying this for $r:=t^{1/2}$, $C:=\frac{1}{\meas (B_t(x))}$ with Theorem~\ref{thm:heat kernel} and Remark~\ref{rem:regular} shows
$$
\lim_{t \to 0^+}\meas (B_{t^{1/2}}(x)) p(x, x, t)=\lim_{t \to 0^+}p^t(x, x, 1)=p_{\mathbb{R}^k}(0_k, 0_k, 1)=\frac{\omega_k}{(4\pi)^{k/2}},
$$
where $p^t$, $p_{\mathbb{R}^k}$ denote the heat kernels of $\bigl(X, t^{-1/2}\dist, \frac{\meas }{\meas (B_{t^{1/2}}(x))}\bigr)$, $\left(\mathbb{R}^k, \dist_{\mathbb{R}^k}, \frac{\mathcal{H}^k}{\omega_k}\right)$, respectively. 
\end{proof}

In the proof of Weyl's law, in the next section, the following finer properties of $\mathcal{R}_k$ will be needed.

\begin{theorem}\label{thm:propRk} Let $(X,\dist,\meas)$ be a $\RCD^*(K,N)$ space with $K\in\setR$ and $N \in (1, +\infty)$.
For all $k$ the set $\mathcal{R}_k$ is $(\meas,k)$-rectifiable and 
$$
\meas\bigl(X\setminus\bigcup_{k=1}^{[N]}\mathcal{R}_k\bigr)=0.
$$
In addition, $\meas\res\mathcal{R}_k\ll\mathcal{H}^k$.
\end{theorem}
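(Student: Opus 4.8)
The plan is to assemble the three assertions from the existing structure theory of $\RCD^*(K,N)$ spaces rather than to establish them afresh; they are of genuinely different depth. The middle identity $\meas\bigl(X\setminus\bigcup_{k=1}^{[N]}\mathcal{R}_k\bigr)=0$ is precisely the structural theorem of Mondino--Naber \cite{MondinoNaber} already recalled in the introduction, so I would simply quote it. The cutoff $k\le[N]$ on the right-hand side costs nothing, since, as observed just after Definition~\ref{def:regular}, the Bishop--Gromov inequality forces $\mathcal{R}_k=\emptyset$ as soon as $k>[N]$; thus the union may as well range over all integers $k$.

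For the $(\meas,k)$-rectifiability of $\mathcal{R}_k$ I would again invoke Mondino--Naber. The core of their analysis is that at a $k$-regular point the rescalings of $(X,\dist,\meas)$ converge, in the sense of Remark~\ref{rem:regular}, to Euclidean $\mathbb{R}^k$; exploiting this one produces, on small balls and for $\meas$-most points, a $k$-tuple of harmonic ($\delta$-splitting) functions whose differentials are almost orthonormal, so that the associated map into $\mathbb{R}^k$ is bi-Lipschitz onto its image on a set of large measure. Covering $\meas$-almost all of $\mathcal{R}_k$ by countably many such charts exhibits a countably $k$-rectifiable set $S'\subset\mathcal{R}_k$ with $\meas(\mathcal{R}_k\setminus S')=0$, which is exactly $(\meas,k)$-rectifiability in the sense recalled above.

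The one genuinely hard input is the final absolute continuity $\meas\res\mathcal{R}_k\ll\mathcal{H}^k$, and here I would follow the route opened by De Philippis--Rindler \cite{DePhilippisRindler} and carried out in the metric setting by Kell--Mondino \cite{KellMondino}, De Philippis--Marchese--Rindler \cite{DePhilippisMarcheseRindler} and Gigli--Pasqualetto \cite{GigliPasqualetto}. The plan is to work in the charts produced above: there $\mathcal{H}^k$ is comparable to the push-forward of $\mathcal{L}^k$, so it suffices to exclude a part of $\meas$ concentrated on an $\mathcal{H}^k$-negligible subset of $\mathcal{R}_k$. Rectifiability by itself does not give this; one must rule out a singular part, and the De Philippis--Rindler theorem supplies exactly the needed rigidity: the polar of the singular part of a PDE-constrained (equivalently, of a Lipschitz-differentiability) measure is confined to the wave cone of the operator, and the full $k$-dimensional Euclidean structure of the tangents of $\meas$ at regular points is incompatible with such degeneracy, forcing the singular part to vanish on $\mathcal{R}_k$. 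I expect this step to carry essentially all the analytic difficulty; the first two assertions are, by contrast, direct citations.
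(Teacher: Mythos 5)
Your proposal is correct and matches the paper's own proof, which is exactly the same citation scheme: \cite{MondinoNaber} for the rectifiability of $\mathcal{R}_k$ and the decomposition $\meas\bigl(X\setminus\bigcup_{k=1}^{[N]}\mathcal{R}_k\bigr)=0$ (indeed in the stronger form of $(1+\eps)$-bi-Lipschitz charts), and \cite{KellMondino}, \cite{DePhilippisMarcheseRindler}, \cite{GigliPasqualetto} (all building on \cite{DePhilippisRindler}) for $\meas\res\mathcal{R}_k\ll\mathcal{H}^k$. Your additional sketches of the $\delta$-splitting charts and of the wave-cone rigidity are accurate but go beyond what the paper records, which confines itself to the references.
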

\begin{proof} See \cite{MondinoNaber} for the proof of the first two statements (more precisely, it has been
proved the stronger property that $\meas$-almost all of $\mathcal{R}_k$ can be covered by bi-Lipschitz charts with biLipschitz
constant arbitrarily close to 1, defined in subsets of the $k$-dimensional Euclidean space). See \cite{KellMondino}, \cite{DePhilippisMarcheseRindler} and \cite{GigliPasqualetto} for the proof of  the absolute continuity statement.
\end{proof}

\section{Weyl's law}

In a metric measure space $(X,\dist,\meas)$ the sequence of eigenvalues can be defined appealing to Courant's min-max procedure:
\begin{equation}\label{eq:Courant}
\lambda_i:=\min\left\{\max_{f\in S,\,\|f\|_{L^2}=1}\Ch(f):\ S\subset H^{1,2}(X,\dist,\meas),\,\,\,
{\rm dim}(S)=i\right\}\qquad i\geq 1.
\end{equation}
We then define
$$
N_{(X,\dist,\meas)}(\lambda):=\#\{i\geq 1:\ \lambda_i\leq\lambda\}
$$
as the ``inverse'' function of $i\mapsto\lambda_i$.
Notice that the formula makes sense even though $\Ch$ is not quadratic, and that the formula shows that the growth rate
of $N_{(X,\dist,\meas)}$ does not change if we replace the distance $\dist$ by a biLipschitz equivalent distance, or 
perturb the measure $\meas$ by a factor uniformly bounded away from $0$ and $+\infty$. Notice also that if $(X,\dist)$
is doubling we can always find a Dirichlet form $\mathcal E$ with $C^{-1}\mathcal E\leq\Ch\leq C\mathcal E$, with
$C$  depending only on the metric doubling constant, see \cite{AmbrosioColomboDiMarino}
(a result previously proved in \cite{Cheeger} for metric measure spaces whose measure is doubling and satisfies
Poincar\'e inequality). Thus, the replacement of $\Ch$ with $\mathcal E$ makes the standard tools of Linear Algebra
applicable.

Let us come now to Weyl's law on $\RCD$-spaces. 
It is not known yet to what extent the restriction of the measure of a $\RCD$ space (or even of a Ricci limit space) to a regular set
is quantitatively comparable to the Hausdorff measure of the corresponding dimension. As the behavior of the Hausdorff measure on the regular sets turns out to be related to the asymptotic behavior of the eigenvalues of the Laplacian, this lack of knowledge seems to be a significant difficulty to establish Weyl's law in the $\RCD$ context in full generality. However, we can bypass this difficulty for a significant class of spaces (including the class of compact Alexandrov spaces, see Corollary~\ref{cor:weyl alex}), by noticing that these spaces satisfy a suitable criterion which implies 
Weyl's law (we provide the implication in Theorem~\ref{thm:weyl}). Let us point out that all known examples of compact $\RCD$-spaces satisfy 
this criterion.

Let us start this section by giving the following, which is to some extent
a generalization of \cite[Theorem 4.6]{CheegerColding3} to the $\RCD$-setting:

\begin{theorem}[Weak Ahlfors regularity]\label{thm:RN}
Let $(X, \dist, \meas)$ be a $\RCD^* (K, N)$-space with $K \in \mathbb{R}$ and $N \in (1,+\infty)$ and set
\begin{equation}\label{eq:defRkstar}
{\mathcal R}_k^*:=\left\{x\in\mathcal{R}_k:\
\exists\lim_{r\to 0^+}\frac{\meas(B_r(x))}{\omega_kr^k}\in (0,+\infty)\right\}.
\end{equation}
Then $\meas(\mathcal{R}_k\setminus\mathcal{R}_k^*)=0$, $\mathcal{H}^N(\mathcal{R}_N\setminus\mathcal{R}_N^*)=0$ 
if $N$ is an integer, $\meas\res\mathcal{R}_k^*$ and 
$\mathcal{H}^k\res\mathcal{R}_k^*$ are mutually absolutely continuous and
\begin{equation}\label{eq:gooddensity}
\lim_{r\to 0^+}\frac{\meas(B_r(x))}{\omega_kr^k}=\frac{\dist \meas\res\mathcal{R}_k^*}{\dist \mathcal{H}^k\res\mathcal{R}_k^*}(x)
\qquad\text{for $\meas$-a.e. $x\in\mathcal{R}_k^*$.}
\end{equation}
Finally, if $k_0=\dimnew(X)$, one has
\begin{equation}\label{eq:goodlimit}
\lim_{r\to 0^+} \frac{\omega_{k_0}r^{k_0}}{\meas(B_{r}(x))}=\chi_{\mathcal{R}^*_{k_0}}(x)
\frac{\dist\mathcal{H}^k\res\mathcal{R}_{k_0}^*}{\dist\meas\res\mathcal{R}_{k_0}^*}(x)
\qquad\text{for $\meas$-a.e. $x\in X$.}
\end{equation}
\end{theorem}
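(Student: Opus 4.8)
The plan is to reduce all the assertions to the differentiation properties of measures recalled in \eqref{eq:density1}--\eqref{eq:densityrett}, feeding in only the rectifiability and absolute continuity of Theorem~\ref{thm:propRk} as structural input, and using the Bishop--Gromov inequality for the single statement about $\mathcal H^N$. First I would fix $k$ and decompose the reference measure along the regular sets: since $\meas(X\setminus\bigcup_{j=1}^{[N]}\mathcal R_j)=0$ and the $\mathcal R_j$ are pairwise disjoint, one has $\meas(B_r(x))=\sum_{j=1}^{[N]}\meas(B_r(x)\cap\mathcal R_j)$ for every $x$. By Theorem~\ref{thm:propRk}, $\mathcal R_k$ is $(\meas,k)$-rectifiable with $\meas\res\mathcal R_k\ll\mathcal H^k$, so on a countably $k$-rectifiable set $S'\subset\mathcal R_k$ carrying $\meas\res\mathcal R_k$ in full one may write $\meas\res\mathcal R_k=\theta_k\,\mathcal H^k\res S'$. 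Then \eqref{eq:densityrett} gives $\meas(B_r(x)\cap\mathcal R_k)/(\omega_kr^k)\to\theta_k(x)$ for $\mathcal H^k$-a.e.\ $x\in S'$, while for each off-diagonal index $j\neq k$ the locally finite measure $\meas\res\mathcal R_j$ satisfies $(\meas\res\mathcal R_j)(\mathcal R_k)=0$ by disjointness, so \eqref{eq:density1} yields $\meas(B_r(x)\cap\mathcal R_j)=o(r^k)$ for $\mathcal H^k$-a.e.\ $x\in\mathcal R_k$. Summing the finitely many terms gives $\meas(B_r(x))/(\omega_kr^k)\to\theta_k(x)$ for $\mathcal H^k$-a.e.\ $x$ in the rectifiable part.

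Next I would locate the limit in $(0,+\infty)$. Finiteness holds $\mathcal H^k$-a.e.\ because $\theta_k$ is a genuine Radon--Nikodym density; positivity holds $\meas$-a.e.\ since $\{\theta_k=0\}$ has $\meas$-measure $\int_{\{\theta_k=0\}}\theta_k\di\mathcal H^k=0$ and $\{\theta_k=+\infty\}$ is $\mathcal H^k$-null, hence $\meas$-null by $\meas\res\mathcal R_k\ll\mathcal H^k$. Because the same absolute continuity transfers every $\mathcal H^k$-a.e.\ statement on $\mathcal R_k$ into a $\meas$-a.e.\ one (and $\meas(\mathcal R_k\setminus S')=0$), I conclude that for $\meas$-a.e.\ $x\in\mathcal R_k$ the limit exists and equals $\theta_k(x)\in(0,+\infty)$. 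This is exactly $\meas(\mathcal R_k\setminus\mathcal R_k^*)=0$ together with \eqref{eq:gooddensity}. The mutual absolute continuity of $\meas\res\mathcal R_k^*$ and $\mathcal H^k\res\mathcal R_k^*$ is then immediate: one direction is inherited from Theorem~\ref{thm:propRk}, and the reverse follows by applying \eqref{eq:density3} with $\mu=\meas$ to any $\meas$-null $S\subset\mathcal R_k^*$, since by the very definition of $\mathcal R_k^*$ one has $\mathcal R_k^*\subset\{x:\ \limsup_{r\to0^+}\meas(B_r(x))/r^k>0\}$.

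For the assertion $\mathcal H^N(\mathcal R_N\setminus\mathcal R_N^*)=0$ when $N$ is an integer I would invoke Bishop--Gromov, which furnishes at every $x\in\supp\meas$ a strictly positive lower bound on $\liminf_{r\to0^+}\meas(B_r(x))/r^N$ (the comparison volume $\int_0^r s_{K,N}$ is asymptotic to $r^N/N$ as $r\to0^+$). Hence $\mathcal R_N\setminus\mathcal R_N^*\subset\{\limsup_{r\to0^+}\meas(B_r(x))/r^N>0\}$, and since this set is $\meas$-null by the previous step, \eqref{eq:density3} with $\mu=\meas$ and $k=N$ gives the claim. This is precisely where the top integer dimension is special: for $k<N$ the Bishop--Gromov bound only yields $\meas(B_r(x))\gtrsim r^N=o(r^k)$, too weak to force a positive $k$-density. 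Finally, \eqref{eq:goodlimit} follows by running the preceding analysis at every dimension $j\le k_0$. For $\meas$-a.e.\ $x\in X$ one has $x\in\mathcal R_j$ for some $j\le k_0$ (sets of dimension above $k_0=\dimnew(X)$ being $\meas$-null by maximality): if $j=k_0$ then $\omega_{k_0}r^{k_0}/\meas(B_r(x))\to 1/\theta_{k_0}(x)$ by \eqref{eq:gooddensity}, which equals $\chi_{\mathcal R_{k_0}^*}(x)\,\tfrac{\di\mathcal H^{k_0}\res\mathcal R_{k_0}^*}{\di\meas\res\mathcal R_{k_0}^*}(x)$; if $j<k_0$ then $\meas(B_r(x))\sim\omega_j\theta_j(x)r^j$ forces $\omega_{k_0}r^{k_0}/\meas(B_r(x))\sim(\omega_{k_0}/(\omega_j\theta_j(x)))\,r^{k_0-j}\to0$, matching $\chi_{\mathcal R_{k_0}^*}(x)=0$.

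I expect the main obstacle to be bookkeeping rather than any single hard estimate: one must handle $\sigma$-finiteness of $\mathcal H^k\res\mathcal R_k^*$ (controlled via the rectifiable part $S'$ and a further use of \eqref{eq:density3}) and verify that $\theta_k$ is genuinely the Radon--Nikodym density of $\meas\res\mathcal R_k^*$ with respect to $\mathcal H^k\res\mathcal R_k^*$, while carefully tracking which exceptional sets are $\mathcal H^k$-null versus $\meas$-null and transferring between the two. The only properly geometric ingredient is the Bishop--Gromov lower volume bound entering the $\mathcal H^N$-statement; everything else is measure-differentiation machinery applied to the decomposition $\meas=\sum_j\meas\res\mathcal R_j$.
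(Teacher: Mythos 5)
Your proposal is correct and follows essentially the same route as the paper's proof: rectifiability and $\meas\res\mathcal{R}_k\ll\mathcal{H}^k$ from Theorem~\ref{thm:propRk}, differentiation via \eqref{eq:densityrett} on a countably $k$-rectifiable carrier together with \eqref{eq:density1} to kill the off-diagonal part of the measure, \eqref{eq:density3} both for the reverse absolute continuity and for the $\mathcal{H}^k$-$\sigma$-finiteness of $\mathcal{R}_k^*$, Bishop--Gromov for the $\mathcal{H}^N$-statement, and $r^{k_0}=o(\meas(B_r(x)))$ on the lower-dimensional regular sets for \eqref{eq:goodlimit}. The only deviations are cosmetic: you treat the off-diagonal measures $\meas\res\mathcal{R}_j$, $j\neq k$, index by index where the paper applies \eqref{eq:density1} once to $\mu=\meas-\meas\res\mathcal{R}_k$, and you phrase the $\mathcal{H}^N$-claim directly through \eqref{eq:density3} rather than through the intermediate assertion $\mathcal{H}^N\ll\meas$.
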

\begin{proof} Let $S_k$ be a countably $k$-rectifiable subset of $\mathcal{R}_k$ with $\meas(\mathcal{R}_k\setminus S_k)=0$.
From \eqref{eq:density3} we obtain that the set $\mathcal{R}_k^*\setminus S_k$ is $\mathcal{H}^k$-negligible, hence
$\mathcal{R}_k^*$ is $(\mathcal{H}^k,k)$-rectifiable.
We denote $\meas_k=\meas\res\mathcal{R}_k$ and recall that, thanks to Theorem~\ref{thm:propRk}, 
$\meas_k\ll\mathcal{H}^k$ and $\meas=\sum_k\meas_k$. 
We denote by
$f:X\to [0,+\infty)$ a Borel function such that $\meas_k=f\mathcal{H}^k\res \mathcal{R}_k^*$ (whose existence
is ensured by the Radon-Nikodym theorem, being $\mathcal{R}_k^*$ $\sigma$-finite w.r.t. $\mathcal{H}^k$) 
and recall that \eqref{eq:densityrett} gives
\begin{equation}\label{eq:densityrett1}
\exists\lim_{r\to 0}\frac{\meas_k(B_r(x))}{\omega_k r^k}=f(x)
\qquad\text{for $\mathcal{H}^k$-a.e. $x\in \mathcal{R}_k^*$.}
\end{equation}
Now, in \eqref{eq:densityrett1} we can replace $\meas_k$ by $\meas$ for $\mathcal{H}^k$-a.e. $x\in \mathcal{R}_k^*$; this is
a direct consequence of \eqref{eq:density1} with $\mu=\meas-\meas_k$ and $S=\mathcal{R}_k^*$. 

Calling then $N_k$ the $\mathcal{H}^k$-negligible (and then $\meas_k$-negligible) subset of $\mathcal{R}_k^*$
where the equality 
$$
\lim_{r\to 0}\frac{\meas(B_r(x))}{\omega_k r^k}=f(x)
$$
fails, we obtain existence and finiteness of the limit on $\mathcal{R}_k^*\setminus N_k$;
since $f$ is a density, it is also obvious that the limit is positive $\meas_k$-a.e., and that 
$\mathcal{H}^k\res \mathcal{R}_k^*\cap\{f>0\}$ is absolutely continuous w.r.t. $\meas_k$. 

This proves that $\meas(\mathcal{R}_k\setminus\mathcal{R}_k^*)=0$ and that $\meas\res\mathcal{R}_k^*$ and 
$\mathcal{H}^k\res\mathcal{R}_k^*$ are mutually absolutely continuous. In the special case $k=N$ a suitable
density lower bound ($\meas(B_r(x))/r^N\geq\meas(X)/({\rm diam}(X))^N$ in the case $K\geq 0$, a more complex lower bound involving the
comparison spaces also holds when $K\leq 0$)
coming from the Bishop-Gromov inequality gives that $\mathcal{H}^N\ll\meas$, hence $\mathcal{R}_N\setminus\mathcal{R}_N^*$
is also $\mathcal{H}^N$-negligible. The last statement \eqref{eq:goodlimit}
follows by the fact that $r^{k_0}=o(\meas(B_r(x)))$ for $\meas$-a.e. $x\in\mathcal{R}_k$, $k<k_0$, since
this property holds on the sets $\mathcal{R}_k^*$. 
\end{proof}

Recall that, as direct consequence of standard arguments from spectral theory and elliptic regularity theory, 
for a compact $\RCD^*(K, N)$ space $(X, \dist, \meas)$ the heat kernel $p$ can be expressed by eigenfunctions:
\begin{equation}\label{eq:eigen}
p(x, y, t)=\sum_ie^{-\lambda_it}\phi_i(x)\phi_i(y)
\end{equation}
for any $x, \,y \in\supp\meas$ and any $t>0$, where $\lambda_i$ is the $i$-th eigenvalue of the Laplacian (counting with
multiplicities) and $\phi_i$ is a corresponding eigenfunction, with $\|\phi_i\|_{L^2}=1$. 
More precisely, in \eqref{eq:eigen} one choose the H\"older continuous representative of $\phi_i$, whose
H\"older norm grows linearly w.r.t. $\lambda$, so that the series in \eqref{eq:eigen} is locally
H\"older continuous in $X\times X\times (0,+\infty)$.

We are now in a position to introduce our first criterion. We always have $\mathcal{H}^k(\mathcal{R}_k^*)>0$
and, if an assumption slightly stronger than the finiteness of $k$-dimensional Hausdorff measure holds, we obtain Weyl's law
in the weak asymptotic form. For simplicity we use the notation; $f(\lambda) \sim g(\lambda)$ for the
existence of $C>1$ satisfying $C^{-1}f(\lambda) \le g(\lambda) \le Cf(\lambda)$ for sufficiently large $\lambda$.

\begin{theorem}\label{thm:weakweyl}
Let $(X, \dist, \meas)$ be a compact $\RCD^* (K, N)$-space with $K\in\setR$ and $N \in (1, +\infty)$, let $k=\dimnew(X)$ and let
$\mathcal{R}_k^*$ be as in \eqref{eq:defRkstar} of Theorem~\ref{thm:RN}.
Then we have 
\begin{equation}\label{eq: lower bound}
\liminf_{t \to 0^+}\left(t^{k/2} \sum_ie^{-\lambda_it} \right) \ge \frac{1}{(4\pi)^{k/2}}
\mathcal{H}^k({\mathcal R}_k^*)>0.
\end{equation}
In particular, if $N_{(X, \dist, \meas)}(\lambda) \sim \lambda^i$ as $\lambda \to +\infty$ for some $i$, 
then Remark~\ref{rem:tauberian} gives $i \ge k/2$. In addition
\begin{equation}\label{eq:equivalencedic}
\limsup_{s\to 0^+}\int_X\frac{s^k}{\meas (B_s(x))}\dist \meas(x) <+\infty
\quad\Longleftrightarrow\quad
N_{(X, \dist, \meas)}(\lambda) \sim \lambda^{k/2}\,\,\,(\lambda\to +\infty).
\end{equation}
\end{theorem}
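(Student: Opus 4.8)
The plan is to transfer everything to the spectral measure $\nu:=\sum_i\delta_{\lambda_i}$, so that $\nu([0,\lambda])=N_{(X,\dist,\meas)}(\lambda)$ and $\hat\nu(t)=\sum_ie^{-\lambda_it}$, and then to feed the resulting diagonal estimates into the one-sided Tauberian/Abelian inequalities of Remark~\ref{rem:tauberian}. First I would record the trace identity: integrating the eigenfunction expansion \eqref{eq:eigen} on the diagonal and using $\|\phi_i\|_{L^2}=1$ gives
$$
\sum_ie^{-\lambda_it}=\int_Xp(x,x,t)\dist\meas(x).
$$
Since $X$ is compact it is $\RCD^*(K',N)$ for some $K'\le 0$ (lower Ricci bounds only improve under $K'\le K$), so the Gaussian bounds \eqref{eq:gaussian} apply on the diagonal and, for $t\in(0,1]$, yield the two-sided comparison
$$
\frac{e^{-C_2}}{C_1}\int_X\frac{t^{k/2}}{\meas(B_{t^{1/2}}(x))}\dist\meas(x)\le t^{k/2}\sum_ie^{-\lambda_it}\le C_1e^{C_2}\int_X\frac{t^{k/2}}{\meas(B_{t^{1/2}}(x))}\dist\meas(x).
$$
After the substitution $s=t^{1/2}$ this is the bridge between the analytic quantity $t^{k/2}\hat\nu(t)$ and the geometric integral appearing in \eqref{eq:equivalencedic}: their $\limsup$'s are simultaneously finite or infinite.

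For the lower bound \eqref{eq: lower bound} I would apply Fatou's lemma to $t^{k/2}p(x,x,t)\ge 0$, obtaining
$$
\liminf_{t\to 0^+}t^{k/2}\sum_ie^{-\lambda_it}\ge\int_X\liminf_{t\to 0^+}t^{k/2}p(x,x,t)\dist\meas.
$$
On $\mathcal{R}_k^*$ I would compute the integrand as a genuine limit by splitting $t^{k/2}p(x,x,t)=\frac{t^{k/2}}{\meas(B_{t^{1/2}}(x))}\cdot\meas(B_{t^{1/2}}(x))p(x,x,t)$: Corollary~\ref{cor:short time} sends the second factor to $\omega_k/(4\pi)^{k/2}$, while the definition \eqref{eq:defRkstar} of $\mathcal{R}_k^*$ sends the first factor to $1/(\omega_k\theta(x))$, where $\theta:=\dist\meas\res\mathcal{R}_k^*/\dist\mathcal{H}^k\res\mathcal{R}_k^*$ is the density supplied by Theorem~\ref{thm:RN}. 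Hence the integrand converges to $1/((4\pi)^{k/2}\theta(x))$ on $\mathcal{R}_k^*$, and integrating against $\dist\meas\res\mathcal{R}_k^*=\theta\dist\mathcal{H}^k\res\mathcal{R}_k^*$ makes $\theta$ cancel, leaving exactly $\frac{1}{(4\pi)^{k/2}}\mathcal{H}^k(\mathcal{R}_k^*)$; the contribution of $X\setminus\mathcal{R}_k^*$ is discarded since it is nonnegative, and strict positivity follows from $\mathcal{H}^k(\mathcal{R}_k^*)>0$ (Theorem~\ref{thm:RN}).

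Finally I would extract the two consequences. The lower bound gives $\liminf_{t\to 0^+}t^{k/2}\hat\nu(t)>0$, i.e. $\hat\nu(t)\ge c\,t^{-k/2}$ for small $t$. If $N(\lambda)\sim\lambda^i$, then $\limsup_\lambda N(\lambda)/\lambda^i<\infty$, so the Abelian inequality \eqref{eq:abe2} with $\gamma=i$ gives $\hat\nu(t)\le C\,t^{-i}$; comparing $c\,t^{-k/2}\le C\,t^{-i}$ as $t\to 0^+$ forces $i\ge k/2$. For the equivalence \eqref{eq:equivalencedic}, the ($\Leftarrow$) direction is immediate from \eqref{eq:abe2} with $\gamma=k/2$ together with the Gaussian comparison of the first paragraph. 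For ($\Rightarrow$), the finiteness hypothesis and that comparison give $\limsup_{t\to 0^+}t^{k/2}\hat\nu(t)<\infty$, while the lower bound gives $\liminf_{t\to 0^+}t^{k/2}\hat\nu(t)>0$; then the Tauberian inequalities \eqref{eq:tau1} and \eqref{eq:tau2} with $\gamma=k/2$ produce respectively $\limsup_\lambda N(\lambda)/\lambda^{k/2}<\infty$ and $\liminf_\lambda N(\lambda)/\lambda^{k/2}>0$, i.e. $N(\lambda)\sim\lambda^{k/2}$.

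The step I expect to be the most delicate is the ($\Rightarrow$) direction of the equivalence. Here one cannot pass the limit inside $\int_X s^k/\meas(B_s(x))\dist\meas$—doing so would require the stronger criterion \eqref{eq:intro weyl cri} reserved for the full Weyl law of Theorem~\ref{thm:weyl}—so the whole conclusion must be routed through the nonquantitative inequality \eqref{eq:tau2}, which crucially needs \emph{both} a positive $\liminf$ and a finite $\limsup$ of $t^{k/2}\hat\nu(t)$. Securing these two bounds simultaneously (the former from Fatou, the latter from the geometric hypothesis via \eqref{eq:gaussian}) is the real crux of the argument.
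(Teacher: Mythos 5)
Your proposal is correct and follows essentially the same route as the paper's proof: the trace identity combined with Fatou's lemma and the pointwise limit coming from Corollary~\ref{cor:short time} together with \eqref{eq:goodlimit} yields \eqref{eq: lower bound}, the diagonal Gaussian comparison \eqref{eq:dominated} serves as the bridge to the volume integral, and the one-sided Abelian/Tauberian inequalities of Remark~\ref{rem:tauberian} (in particular \eqref{eq:tau1}--\eqref{eq:tau2} for the $\Rightarrow$ direction and \eqref{eq:abe2} for the converse) give \eqref{eq:equivalencedic}. Your explicit factorization of $t^{k/2}p(x,x,t)$ and the reduction to $K\le 0$ merely spell out steps the paper leaves implicit.
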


\begin{proof} In order to prove \eqref{eq: lower bound} we first notice that the combination of
\eqref{eq:shorttime} and \eqref{eq:goodlimit} gives
$$
\lim_{t\to 0^+}t^{k/2}p(x,x,t)=\frac{1}{(4\pi)^{k/2}}\chi_{\mathcal{R}^*_k}(x)
\frac{\dist\mathcal{H}^k\res\mathcal{R}_k^*}{\dist\meas\res\mathcal{R}_k^*}(x)
\quad\text{for $\meas$-a.e. $x\in X$.}
$$
Using the identity $t^{k/2} \sum_ie^{-\lambda_it}=\int_X t^{k/2} p(x,x,t)\dist\meas(x)$ and Fatou's lemma
we obtain 
$$
\liminf_{t \to 0}\left(t^{k/2} \sum_ie^{-\lambda_it} \right) \ge \frac{1}{(4\pi)^{k/2}}\int_{\mathcal{R}_k^*}
\frac{\dist\mathcal{H}^k\res\mathcal{R}_k^*}{\dist\meas\res\mathcal{R}_k^*}\dist\meas=
\frac{1}{(4\pi)^{k/2}}\mathcal{H}^k({\mathcal R}_k^*).
$$
The heat kernel estimate \eqref{eq:gaussian} shows
\begin{equation}\label{eq:dominated}
C^{-1}\frac{t^{k/2}}{\meas (B_{t^{1/2}}(x))}\leq
t^{k/2}p(x,x,t)\le C\frac{t^{k/2}}{\meas (B_{t^{1/2}}(x))}
\end{equation}
for some $C>1$, which is independent of $t$ and $x$. Thus the upper bound on $p$ gives
$$
\limsup_{t \to 0^+}t^{k/2}\int_Xp(x, x, t)\dist\meas(x) \leq
C\limsup_{s\to 0^+}\int_X\frac{s^k}{\meas (B_s(x))}\dist \meas(x) <+\infty.
$$
We can now invoke Remark~\ref{rem:tauberian} to obtain the implication $\Rightarrow$ in \eqref{eq:equivalencedic}.
The proof of the converse implication is similar and uses the lower bound in \eqref{eq:dominated}.
\end{proof}

Under the stronger assumption \eqref{eq:limit} (notice that both the finiteness of the limit and the 
equality of the integrals are part of the assumption) we can recover Weyl's law in the stronger form. 

\begin{theorem}\label{thm:weyl}
Let $(X, \dist, \meas)$ be a compact $\RCD^* (K, N)$-space with $K \in \mathbb{R}$ and $N \in (1, +\infty)$, and let $k=\dimnew(X)$.
Then 
\begin{equation}\label{eq:limit}
\lim_{s \to 0^+}\int_X\frac{s^{k}}{\meas (B_{s}(x))}\dist \meas(x) = 
\int_X\lim_{s \to 0^+}\frac{s^{k}}{\meas (B_{s}(x))}\dist \meas(x) <+\infty
\end{equation}
if and only if 
\begin{equation}\label{eq:weyls law}
\lim_{\lambda\to +\infty}\frac{N_{(X, \dist, \meas)}(\lambda)}{\lambda^{k/2}}
=\frac{\omega_k}{(2\pi)^k}\mathcal{H}^k(\mathcal{R}_k^*) <+\infty.
\end{equation}
\end{theorem}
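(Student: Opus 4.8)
The plan is to recast the equivalence as a statement about \emph{uniform integrability} of two comparable families of nonnegative functions, and then to pass from the resulting heat-trace asymptotics to the eigenvalue asymptotics by Karamata's theorem (Theorem~\ref{thm:karamata}). Set $\psi_t(x):=t^{k/2}p(x,x,t)$ and $\phi_s(x):=s^k/\meas(B_s(x))$, both nonnegative and, since $(X,\dist)$ is compact and $\meas$ finite, bounded and in $L^1(X,\meas)$ for each fixed $t,s>0$. Writing $D(x):=\frac{\dist\mathcal{H}^k\res\mathcal{R}_k^*}{\dist\meas\res\mathcal{R}_k^*}(x)$, combining \eqref{eq:shorttime} and \eqref{eq:goodlimit} exactly as in the proof of Theorem~\ref{thm:weakweyl} gives the $\meas$-a.e.\ pointwise limits
\[
\psi_t\to f:=\frac{1}{(4\pi)^{k/2}}\chi_{\mathcal{R}^*_k}D,\qquad \phi_s\to g:=\frac{1}{\omega_k}\chi_{\mathcal{R}^*_k}D,
\]
so that $f=\omega_k(4\pi)^{-k/2}g$, and integrating $D$ against $\meas$ over $\mathcal{R}_k^*$ (where $\dist\mathcal{H}^k=D\dist\meas$) yields $\int_X f\dist\meas=(4\pi)^{-k/2}\mathcal{H}^k(\mathcal{R}_k^*)$ and $\int_X g\dist\meas=\omega_k^{-1}\mathcal{H}^k(\mathcal{R}_k^*)$.

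The heart of the argument is the observation that, because $\meas$ is finite and $\phi_s\to g$ $\meas$-a.e.\ with $\phi_s\ge 0$, condition \eqref{eq:limit} is \emph{equivalent} to the uniform integrability of $\{\phi_s\}_{0<s\le 1}$. Indeed $L^1$-convergence forces uniform integrability, while conversely uniform integrability together with the $\meas$-a.e.\ convergence gives $L^1$-convergence by Vitali's theorem, hence $\int_X\phi_s\dist\meas\to\int_X g\dist\meas<\infty$, which is precisely \eqref{eq:limit}; the direction from \eqref{eq:limit} to uniform integrability is Lemma~\ref{lem:dct} applied with $f_i=g_i=\phi_{s_i}$ along an arbitrary sequence $s_i\to0$. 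The decisive point is then that the two-sided comparison \eqref{eq:dominated} reads $C^{-1}\phi_{t^{1/2}}\le\psi_t\le C\phi_{t^{1/2}}$, and since a family dominated by a uniformly integrable family is itself uniformly integrable, $\{\phi_s\}$ is uniformly integrable \emph{if and only if} $\{\psi_t\}$ is. By the same Vitali/Lemma~\ref{lem:dct} dichotomy applied to $\psi_t\to f$, uniform integrability of $\{\psi_t\}$ is in turn equivalent to
\[
\lim_{t\to 0^+}t^{k/2}\sum_i e^{-\lambda_i t}=\lim_{t\to 0^+}\int_X\psi_t\dist\meas=\int_X f\dist\meas=\frac{1}{(4\pi)^{k/2}}\mathcal{H}^k(\mathcal{R}_k^*),
\]
using the trace identity $\int_X\psi_t\dist\meas=t^{k/2}\sum_ie^{-\lambda_it}$ from \eqref{eq:eigen}.

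Finally I would feed this trace asymptotic into Karamata's theorem applied to $\nu:=\sum_i\delta_{\lambda_i}$, for which $\hat\nu(t)=\sum_ie^{-\lambda_it}$ and $\nu([0,\lambda])=N_{(X,\dist,\meas)}(\lambda)$, with $\gamma=k/2$ and $a=(4\pi)^{-k/2}\mathcal{H}^k(\mathcal{R}_k^*)$. Since $\Gamma(k/2+1)=\pi^{k/2}/\omega_k$ one computes $a/\Gamma(k/2+1)=\omega_k(2\pi)^{-k}\mathcal{H}^k(\mathcal{R}_k^*)$, so that the displayed asymptotic is equivalent to \eqref{eq:weyls law}. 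Chaining the equivalences gives \eqref{eq:limit}$\,\Longleftrightarrow\,$\eqref{eq:weyls law}. I expect the main obstacle to be conceptual rather than technical: the constant $C$ in \eqref{eq:dominated} is not sharp, so applying \eqref{eq:dominated} directly to the integrals — as in Theorem~\ref{thm:weakweyl} — only controls $\limsup_t\int_X\psi_t\dist\meas$ up to the factor $C$ and destroys the Weyl constant. The key idea is that $C$ is irrelevant at the level of uniform integrability, so one transfers \emph{integrability} (not the value of the integral) between the metric side $\phi_s$ and the spectral side $\psi_t$, and the sharp constant $\omega_k(2\pi)^{-k}$ is recovered only at the end from the pointwise limits $f,g$ and Karamata's theorem. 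The finiteness $\mathcal{H}^k(\mathcal{R}_k^*)<\infty$ needed for $f,g\in L^1$ is part of the hypothesis in each direction (from \eqref{eq:limit}, resp.\ \eqref{eq:weyls law}), so all the Vitali and Lemma~\ref{lem:dct} steps are justified.
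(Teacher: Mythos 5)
Your proposal is correct and follows essentially the same route as the paper: the same a.e.\ limits from \eqref{eq:shorttime} and \eqref{eq:goodlimit}, the same two-sided comparison \eqref{eq:dominated} used only to transfer integrability (not the value of the integral) between $s^k/\meas(B_s(x))$ and $t^{k/2}p(x,x,t)$, and the same conversion of the trace asymptotic into \eqref{eq:weyls law} via Karamata's theorem with $\gamma=k/2$. Your uniform-integrability/Vitali packaging is just an equivalent reformulation of the paper's two applications of Lemma~\ref{lem:dct} (one in each direction, with the dominating roles of the two families swapped), so the mathematical content coincides.
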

\begin{proof}
We first assume that \eqref{eq:limit} holds. Taking \eqref{eq:goodlimit} and \eqref{eq:dominated}
into account, we can apply Lemma~\ref{lem:dct} with $f_t(x)=t^{k/2}p(x,x,t)$ and $g_t(x)=Ct^{k/2}/\meas (B_{t^{1/2}}(x))$ to get
\begin{align*}
\lim_{t \to 0^+}t^{k/2}\int_Xp(x, x, t)\dist \meas(x) &=\int_X \lim_{t \to 0^+}t^{k/2}p(x, x, t)\dist \meas(x) \\
&=\int_{\mathcal{R}_k^*}\frac{1}{(4\pi)^{k/2}}\frac{\dist \mathcal{H}^k\res\mathcal{R}_k^*}
{\dist\meas\res\mathcal{R}_k^*}\dist \meas \\
&=\frac{1}{(4\pi)^{k/2}}\mathcal{H}^k(\mathcal{R}_k^*)
\end{align*}
which shows \eqref{eq:weyls law} by Karamata's Tauberian theorem.

Next we assume that \eqref{eq:weyls law} holds. Then by \eqref{eq:goodlimit} and 
Karamata's Tauberian theorem again, \eqref{eq:weyls law} is equivalent to
\begin{equation}\label{eq:limit1}
\lim_{t \to 0^+}t^{k/2}\int_Xp(x, x, t)\dist \meas(x) =\int_X\lim_{t \to 0^+}t^{k/2}p(x, x, t)\dist \meas(x)<+\infty.
\end{equation}
Let $f_t(x):=t^{k/2}/\meas (B_{t^{1/2}}(x))$.
Then the heat kernel estimate \eqref{eq:dominated} shows that we can apply Lemma~\ref{lem:dct}
with $g_t(x) =C t^{k/2}p(x, x, t)$ to get \eqref{eq:limit}. 
\end{proof}

By the stability of $\RCD$-conditions with respect to mGH-convergence and \cite[Theorem 5.1]{CheegerColding1}, noncollapsed Ricci limit spaces give typical examples of $\RCD^*(K, N)$-spaces $(X, \dist, \meas)$ with $\dimnew{X}=N$. For such metric measure spaces Weyl's law was proven in \cite{Ding} by Ding. Thus the following corollary also recovers his result.

\begin{corollary}\label{cor:weyls law}
Let $(X, \dist, \meas)$ be a compact $\RCD^*(K, N)$-space with $K \in \mathbb{R}$ and $N \in (1, +\infty)$, and let $k=\dimnew{X}$. Assume that either $k=N$, or that for any integer $i$ such that $\meas (\mathcal{R}_i)>0$ there exists $g_i \in L^1(\mathcal{R}^*_i, \mathcal{H}^i)$ 
such that 
$$
g_i(x, t):=\frac{t^k}{\meas (B_t(x))}\frac{\dist \meas \res {\mathcal{R}^*_i}}{\dist \mathcal{H}^i\res {\mathcal{R}^*_i}}(x)
\leq g_i(x)\qquad\forall t\in (0,1)
$$
for $\mathcal{H}^i$-a.e. $x\in \mathcal{R}^*_i$. Then \eqref{eq:weyls law} holds.
\end{corollary}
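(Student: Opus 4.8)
The plan is to verify the hypothesis \eqref{eq:limit} of Theorem~\ref{thm:weyl}; granting it, \eqref{eq:weyls law} follows immediately. Since the pointwise limit of $f_s(x):=s^k/\meas(B_s(x))$ as $s\to 0^+$ already exists for $\meas$-a.e.\ $x$ by \eqref{eq:goodlimit}, the only point at stake is the interchange of limit and integral together with finiteness, i.e.\ a dominated convergence statement for the family $\{f_s\}_{s\in(0,1)}$.

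First I would treat the case $k=N$, in which $N$ is an integer. Here the Bishop--Gromov inequality (exactly as recorded in the proof of Theorem~\ref{thm:RN}) yields a uniform lower bound $\meas(B_s(x))\geq c\,s^N$ for all $x\in\supp\meas$ and all $s\in(0,{\rm diam}(X)]$, with $c>0$ depending only on $X$, $K$, $N$. Hence $f_s\leq 1/c$ uniformly, a constant which is $\meas$-integrable because $X$ is compact and $\meas(X)<\infty$. The dominated convergence theorem then gives at once both the finiteness and the equality of the two integrals in \eqref{eq:limit}.

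For the general case I would bring in the decomposition of $\meas$ from Theorem~\ref{thm:propRk}: writing $\meas=\sum_{i=1}^{[N]}\meas\res\mathcal{R}_i$ and, by Theorem~\ref{thm:RN}, $\meas\res\mathcal{R}_i=\rho_i\,\mathcal{H}^i\res\mathcal{R}_i^*$ with $\rho_i:=\frac{\dist\meas\res\mathcal{R}_i^*}{\dist\mathcal{H}^i\res\mathcal{R}_i^*}$, the change of variables gives
\[
\int_{\mathcal{R}_i^*}f_s\dist\meas=\int_{\mathcal{R}_i^*}f_s\,\rho_i\dist\mathcal{H}^i=\int_{\mathcal{R}_i^*}g_i(x,s)\dist\mathcal{H}^i(x),
\]
so that the assumed $g_i\in L^1(\mathcal{R}_i^*,\mathcal{H}^i)$ is precisely the dominating function needed to run the dominated convergence theorem on the piece $(\mathcal{R}_i^*,\mathcal{H}^i)$. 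The sum runs over finitely many $i\in\{1,\dots,[N]\}$, and only those with $\meas(\mathcal{R}_i)>0$ (necessarily $i\leq k$) contribute. Passing to the limit termwise and using \eqref{eq:gooddensity}, one finds $\lim_{s\to 0^+}g_i(\cdot,s)=0$ $\mathcal{H}^i$-a.e.\ for $i<k$ (there $s^k=o(\meas(B_s(x)))$), while for $i=k$ the factor $s^k/\meas(B_s(x))\to 1/(\omega_k\rho_k(x))$ cancels $\rho_k$, giving $\lim_{s\to 0^+}g_k(\cdot,s)=1/\omega_k$ $\mathcal{H}^k$-a.e.\ on $\mathcal{R}_k^*$. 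Summing the contributions yields $\lim_{s\to 0^+}\int_X f_s\dist\meas=\omega_k^{-1}\mathcal{H}^k(\mathcal{R}_k^*)$, which agrees with $\int_X\lim_{s\to 0^+}f_s\dist\meas$ as computed directly from \eqref{eq:goodlimit}; the integrability of $g_k$ also forces $\mathcal{H}^k(\mathcal{R}_k^*)<\infty$. This establishes \eqref{eq:limit}, and Theorem~\ref{thm:weyl} then delivers \eqref{eq:weyls law}.

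The main obstacle is essentially bookkeeping rather than a genuine difficulty: the key move is the change of variables transforming each $\meas$-integral over $\mathcal{R}_i^*$ into an $\mathcal{H}^i$-integral, which is what makes the hypothesis---phrased on $(\mathcal{R}_i^*,\mathcal{H}^i)$---applicable, and which rests on the mutual absolute continuity of $\meas\res\mathcal{R}_i^*$ and $\mathcal{H}^i\res\mathcal{R}_i^*$ granted by Theorem~\ref{thm:RN}. The one computation requiring attention is the identification of $\lim_{s\to 0^+}g_i(\cdot,s)$: the cancellation producing the clean value $1/\omega_k$ on the top-dimensional piece and the vanishing on the lower-dimensional pieces are exactly what align the limit integral with $\omega_k^{-1}\mathcal{H}^k(\mathcal{R}_k^*)$, as demanded by Theorem~\ref{thm:weyl}.
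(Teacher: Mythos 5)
Your proposal is correct and takes essentially the same route as the paper: the paper's own two-line proof likewise verifies hypothesis \eqref{eq:limit} of Theorem~\ref{thm:weyl} by the dominated convergence theorem with the assumed $g_i$ as dominating functions, and handles the case $k=N$ via the Bishop--Gromov lower bound on $\meas(B_r(x))/r^N$. Your additional bookkeeping---the decomposition $\meas=\sum_i\meas\res\mathcal{R}_i$, the change of variables turning each $\meas$-integral into an $\mathcal{H}^i$-integral on $\mathcal{R}_i^*$, and the identification of the pointwise limits ($0$ for $i<k$, $1/\omega_k$ for $i=k$)---simply makes explicit what the paper leaves to the reader.
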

\begin{proof} If the functions $g_i$ exist, the proof
follows by the dominated convergence theorem in conjunction with Theorem~\ref{thm:weyl}.
When $k=N$ the existence of the functions $g_i$ follows directly from the Bishop-Gromov inequality, since
$\meas(B_r(x))/r^k$ is bounded from below by a positive constant. 
\end{proof}

\begin{example}\label{rem:example1}
Let us apply Theorem~\ref{thm:weyl} to the following $\RCD^*(N-1, N)$-space:
$$
(X, \dist, \meas):=\left([0, \pi], \dist_{[0, \pi]},\sin^{N-1}t\dist t\right)
$$
for $N \in (1, \infty)$ (note that this is a Ricci limit space if $N$ is an integer, see for instance \cite{AmbrosioHonda}).
Then we can apply Theorem~\ref{thm:weyl} with $k=1$ and $\mathcal{R}_1^*=\mathcal{R}_1=(0, \pi)$, because of 
$\sup_{t<1}\|g_1(\cdot, t)\|_{L^{\infty}}<\infty$, where $g_1$ is as in Corollary \ref{cor:weyls law}.
Thus we have Weyl's law:
$$
\lim_{\lambda \to +\infty}\frac{N_{(X, \dist, \meas)}(\lambda )}{\lambda^{1/2}}=\frac{\omega_1}{2\pi}\mathcal{H}^1((0, \pi ))=1.
$$
\end{example}

\begin{example}[Iterated suspensions]\label{rem:example2}
Let us apply now Theorem~\ref{thm:weyl} to iterated suspensions of $(X, \dist, \meas)$ as in Example~\ref{rem:example1}:
$$
\begin{cases}
(X_1, \dist_1, \meas_1):= ([0, \pi], \dist_{[0, \pi]},\sin^{N-1}t \dist t),\\
 (X_{n+1}, \dist_{n+1}, \meas_{n+1}):=([0, \pi], \dist_{[0, \pi]},\sin t \dist t)\times^1 (X_n, \dist_n, \meas_n).
\end{cases}
$$

Recall that the spherical suspension $([0, \pi], \dist_{[0, \pi]},\sin t \dist t)\times^1 (X, \dist, \meas)$ of a metric measure space $(X,\dist,\meas)$ is the quotient of the product $[0,\pi] \times X$ by the identification of every point of $\{0\}\times X$ and $\{\pi\}\times X$ into two distinct points, equipped with the product measure $\dist \mu := \sin t\dist t\times \meas$ and with the distance $\dist_{\mathrm{susp}}$ defined by 
$$
\cos \dist_{\mathrm{susp}}\bigl((t,x),(s,y)\bigr) = \cos t \cos s + \sin t \sin s \cos (\min \{\dist (x,y), \pi\}).
$$

Note that $(X_n, \dist_n, \meas_n)$ is a $\RCD^*(N+n-2, N+n-1)$-space (see \cite{Ketterer15a}) and that $(X_n, \dist_n)$ are isometric to a hemisphere of the $n$-dimensional unit sphere $\mathbb{S}^n(1)$ as metric spaces.

Then we can apply Theorem~\ref{thm:weyl}  because an elementary calculation similar to the one of
Example~\ref{rem:example1} shows that
$\sup_{t<1}\|g_n(\cdot , t)\|_{L^\infty}<\infty$.
Thus Weyl's law follows:
$$
\lim_{\lambda \to +\infty}\frac{N_{(X_n \dist_n, \meas_n)}(\lambda)}{\lambda^{n/2}}=\frac{\omega_n}{(2\pi)^n}\mathcal{H}^n(X_n)=\frac{\omega_n}{(2\pi)^n}\frac{\mathcal{H}^n(\mathbb{S}^n(1))}{2}.
$$
\end{example}

\begin{example}[Gaussian spaces]\label{rem:example3}
For noncompact $\RCD(K, \infty)$-spaces the behavior of the spectrum is different, and requires a more delicate analysis. 
For instance (see \cite[(2.2)]{Milman}) the $n$-dimensional Gaussian space 
$(X, \dist, \meas):=(\mathbb{R}^n, \dist_{\mathbb{R}^n}, \gamma_n)$
satisfies
$$
\lim_{\lambda \to +\infty}\frac{N_{(X, \dist, \meas)}(\lambda )}{\lambda^n}=\frac{1}{\Gamma (n+1)}.
$$
\end{example}

\begin{corollary}[Weyl's law on compact Ahlfors regular $\RCD^*(K, N)$-spaces - especially Alexandrov spaces]\label{cor:weyl alex}
Let $(X, \dist, \meas)$ be a compact $\RCD^*(K, N)$-space with $K \in \mathbb{R}$ and $N \in (1, +\infty)$.
Assume that $(X, \dist, \meas)$ is Ahlfors $n$-regular for some $n \in \mathbb{N}$, i.e. 
there exists $C>1$ such that $$C^{-1}r^n \le \meas (B_r(x)) \le Cr^n\qquad\forall x \in X,\,\,r \in (0, 1).$$
Then we have Weyl's law:
\begin{equation}\label{eq:weyl ahlfors}
\lim_{\lambda \to +\infty}\frac{N_{(X, \dist, \meas)}(\lambda)}{\lambda^{n/2}}=\frac{\omega_n}{(2\pi)^n}\mathcal{H}^n(X).
\end{equation} 
In particular this holds if $(X, \dist, \meas)$ is an $n$-dimensional compact Alexandrov space.
\end{corollary}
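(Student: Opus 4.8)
The plan is to bring the statement under Theorem~\ref{thm:weyl}, exploiting the two-sided bound $C^{-1}\le s^n/\meas(B_s(x))\le C$ that Ahlfors $n$-regularity supplies uniformly for $x\in X$ and $s\in(0,1)$. Before applying that theorem I must identify the maximal regular dimension, and I claim $\dimnew(X)=n$. Indeed, by Theorem~\ref{thm:propRk} one has $\meas(X\setminus\bigcup_{j=1}^{[N]}\mathcal{R}_j)=0$, and since $\meas(X)>0$ some $\mathcal{R}_j$ carries positive mass; whenever $\meas(\mathcal{R}_j)>0$ Theorem~\ref{thm:RN} gives $\meas(\mathcal{R}_j^*)>0$, so there is a point $x\in\mathcal{R}_j^*$ at which $\lim_{r\to0^+}\meas(B_r(x))/(\omega_jr^j)$ exists in $(0,+\infty)$. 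But Ahlfors regularity traps $\meas(B_r(x))/r^j$ between $C^{-1}r^{\,n-j}$ and $Cr^{\,n-j}$, so a finite positive limit is possible only when $j=n$. Hence $n$ is the unique index with $\meas(\mathcal{R}_j)>0$ and $k:=\dimnew(X)=n$.

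With $k=n$ I verify the criterion \eqref{eq:limit}. By \eqref{eq:goodlimit} the limit $\lim_{s\to0^+}s^n/\meas(B_s(x))$ exists for $\meas$-a.e.\ $x$, while the integrands are dominated by the constant $C$, which is $\meas$-integrable because $\meas(X)<\infty$; the dominated convergence theorem then yields both the finiteness and the interchange of limit and integral required in \eqref{eq:limit}. This is precisely the hypothesis of Corollary~\ref{cor:weyls law}, the only relevant index being $i=n$ with dominating function $g_n:=C\,\frac{\dist\meas\res\mathcal{R}_n^*}{\dist\mathcal{H}^n\res\mathcal{R}_n^*}\in L^1(\mathcal{R}_n^*,\mathcal{H}^n)$. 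In either reading Theorem~\ref{thm:weyl} gives
\[
\lim_{\lambda\to+\infty}\frac{N_{(X,\dist,\meas)}(\lambda)}{\lambda^{n/2}}
=\frac{\omega_n}{(2\pi)^n}\mathcal{H}^n(\mathcal{R}_n^*).
\]

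It then remains to upgrade $\mathcal{R}_n^*$ to $X$ on the right-hand side, and this is the only genuinely non-formal step, since everything else is an assembly of results already established. Set $S:=X\setminus\mathcal{R}_n^*$; the previous paragraph shows $\meas(S)=0$, whereas the Ahlfors lower bound gives $\limsup_{r\to0^+}\meas(B_r(x))/r^n\ge C^{-1}>0$ for every $x\in X$, so $S$ is contained in the set appearing in the hypothesis of \eqref{eq:density3} with $\mu=\meas$ and exponent $n$. Applying \eqref{eq:density3} yields $\mathcal{H}^n(S)=0$, hence $\mathcal{H}^n(\mathcal{R}_n^*)=\mathcal{H}^n(X)$ (finite, since the right-hand side above is finite), and \eqref{eq:weyl ahlfors} follows.

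For the final assertion it suffices to check that a compact $n$-dimensional Alexandrov space $(X,\dist,\mathcal{H}^n)$ satisfies the hypotheses already treated. It is a compact $\RCD^*(K,n)$-space for a suitable $K$, and it is Ahlfors $n$-regular: the upper bound $\mathcal{H}^n(B_r(x))\le Cr^n$ is the Bishop-Gromov volume comparison, while the matching lower bound follows from Bishop-Gromov monotonicity of $r\mapsto\mathcal{H}^n(B_r(x))/v_K(r)$ by comparing $B_r(x)$ with $B_{{\rm diam}(X)}(x)=X$ and using $v_K(r)\ge c\,r^n$ for small $r$. Thus \eqref{eq:weyl ahlfors} applies verbatim and gives Weyl's law on $(X,\dist,\mathcal{H}^n)$.
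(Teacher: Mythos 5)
Your proof is correct and follows essentially the same route as the paper: reduce to Theorem~\ref{thm:weyl} (via the dominated convergence criterion of Corollary~\ref{cor:weyls law}, with the Ahlfors constant $C$ as dominating function) after identifying $\dimnew(X)=n$, and then upgrade $\mathcal{H}^n(\mathcal{R}_n^*)$ to $\mathcal{H}^n(X)$. The only cosmetic difference is in two sub-steps: the paper obtains $\mathcal{R}_i=\emptyset$ for $i\neq n$ by noting that tangent cones inherit Ahlfors $n$-regularity, and obtains $\mathcal{H}^n(X\setminus\mathcal{R}_n)=0$ from the mutual absolute continuity $\mathcal{H}^n\ll\meas\ll\mathcal{H}^n$, whereas you reach the same conclusions through the density characterization of $\mathcal{R}_j^*$ (a positive finite $j$-density is incompatible with Ahlfors $n$-regularity unless $j=n$) and a direct application of \eqref{eq:density3}; both arguments are sound.
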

\begin{proof}
Note that by the Ahlfors $n$-regularity of $(X, \dist, \meas)$, any tangent cone at $x$ also satisfies the Ahlfors $n$-regularity, which implies that $\mathcal{R}_i = \emptyset $ for any $i \neq n$.
In particular since $\mathcal{H}^n \ll \meas \ll \mathcal{H}^n$, we have 
\begin{equation}\label{eq:n-dim}
\meas (X \setminus \mathcal{R}_n)=\mathcal{H}^n(X \setminus \mathcal{R}_n)=0.
\end{equation}
Then Theorem~\ref{thm:weyl} can be applied with $g_k \equiv c$ for some $c>0$, which proves \eqref{eq:weyl ahlfors} by \eqref{eq:n-dim}.
The final statement follows from the compatibility between Alexandrov spaces and $\RCD$-spaces \cite{Petrunin, ZhangZhu}.
\end{proof}

\begin{example}\label{ex:weighted weyl}
Let us discuss the simplest case we can apply Corollary \ref{cor:weyl alex}; let $M$ be a compact $n$-dimensional manifold and let $f \in C^2(M)$.
Then, thanks to (\ref{eq:RCD smooth metric measure space}), for any $N \in (n, \infty)$ there exists 
$K\in\mathbb{R}$ such that $(M, \dist, e^{-f}\mathcal{H}^n)$ is a $\RCD^*(K, N)$-space.
Moreover since $\left(M, \dist, e^{-f}\mathcal{H}^n\right)$ is Ahlfors $n$-regular, 
Corollary~\ref{cor:weyl alex} yields Weyl's law:
$$
\lim_{\lambda \to +\infty}\frac{N_{(M, \dist, e^{-f}\mathcal{H}^n)}(\lambda)}{\lambda^{n/2}}
=\frac{\omega_n}{(2\pi)^n}\mathcal{H}^n(M).
$$
\end{example}

In order to give another application of Weyl's law on compact finite dimensional Alexandrov spaces, let us recall that two compact finite dimensional Alexandrov spaces are said to be \textit{isospectral} if the spectrums of their Laplacians coincide.
See for instance \cite{Sunada, EngelWeiandt} for constructions of isospectral manifolds and of isospectral Alexandrov spaces (see also \cite{KuwaeMachigashiraShioya} for analysis on Alexandrov spaces).
 
It is also well-known as a direct consequence of Perelman's stability theorem \cite{Perelman} (see also \cite{Kapovitch}) that for fixed $n \in \mathbb{N}$, $K \in \mathbb{R}$ and $d, \,v >0$ the isometry class of $n$-dimensional compact Alexandrov spaces $X$ of sectional curvature bounded below by $K$ with $\mathrm{diam}\,X \le d$ and $\mathcal{H}^n(X) \ge v$ has only finitely many topological types.
By using this and Weyl's law, we have
the following which is a generalization of topological finiteness results for isospectral spaces proven in \cite{BrooksPetersenPerry, Harvey, Stanhope} to Alexandrov spaces.

\begin{corollary}[Topological finiteness theorem for isospectral Alexandrov spaces]\label{cor:finiteness}
Let $\chi:=\{(X_u, \dist_u, \mathcal{H}^{n_u})\}_{u\in U}$ be a class of compact finite dimensional Alexandrov spaces with a uniform sectional curvature bound from below. Assume that there exists $C>1$ such that
\begin{equation}\label{eq:equi bounded}
\limsup_{\lambda\to +\infty}\frac{N_{(X_u, \dist_u, \mathcal{H}^{n_u})}(\lambda)}
{N_{(X_v, \dist_v, \mathcal{H}^{n_v})}(\lambda)}\le C
\end{equation}
for all $u,\,v \in U$.
Then $\chi$  has only finitely many topological types.

In particular, any class of isospectral compact finite dimensional Alexandrov spaces with a uniform
sectional curvature bound from below has only finitely many members up
to homeomorphism.
\end{corollary}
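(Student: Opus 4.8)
The plan is to read the dimension and total volume of each member off the leading term of its Weyl asymptotics, to show that \eqref{eq:equi bounded} pins these data down uniformly, and then to invoke Perelman's finiteness theorem; the delicate point will be a uniform diameter bound. Concretely, I would first apply Corollary~\ref{cor:weyl alex}: each $(X_u,\dist_u,\mathcal{H}^{n_u})$ is a compact $n_u$-dimensional Alexandrov space, hence by the compatibility with the $\RCD$ theory \cite{Petrunin,ZhangZhu} a compact Ahlfors $n_u$-regular $\RCD^*(K,N)$-space for suitable $K,N$, so
\[
N_{(X_u,\dist_u,\mathcal{H}^{n_u})}(\lambda)\sim \frac{\omega_{n_u}}{(2\pi)^{n_u}}\,\mathcal{H}^{n_u}(X_u)\,\lambda^{n_u/2}\qquad(\lambda\to+\infty),
\]
with strictly positive and finite leading coefficient since $\mathcal{H}^{n_u}(X_u)\in(0,+\infty)$. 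For any pair $u,v$ the ratio $N_{(X_u)}(\lambda)/N_{(X_v)}(\lambda)$ is then asymptotic to a positive constant times $\lambda^{(n_u-n_v)/2}$; if $n_u>n_v$ it diverges, contradicting \eqref{eq:equi bounded}, and applying this to $(v,u)$ as well forces $n_u=n_v$. Thus a single integer $n$ satisfies $n_u=n$ for all $u$, the ratio converges to $\mathcal{H}^n(X_u)/\mathcal{H}^n(X_v)$, and \eqref{eq:equi bounded} together with its symmetric version yields $C^{-1}\le \mathcal{H}^n(X_u)/\mathcal{H}^n(X_v)\le C$. Fixing one index, all volumes lie in a fixed interval $[v,V]\subset(0,+\infty)$, giving in particular a uniform lower bound $\mathcal{H}^n(X_u)\ge v>0$.

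With $n$, $K$ and $v$ fixed, it remains to produce a uniform bound $\mathrm{diam}(X_u)\le d$, so that the whole family sits inside one class of $n$-dimensional compact Alexandrov spaces with $\mathrm{sec}\ge K$, $\mathrm{diam}\le d$, $\mathcal{H}^n\ge v$, to which Perelman's theorem applies. When the common curvature lower bound is positive this is automatic, since the Bonnet--Myers theorem gives $\mathrm{diam}(X_u)\le \pi/\sqrt{K}$; the substance of the statement therefore lies in the non-positively curved case, and I expect this diameter bound to be the main obstacle. The reason is that the information squeezed out of \eqref{eq:equi bounded} is of leading Weyl order only --- it records the dimension and the total volume but is blind to the diameter --- so the diameter cannot be read directly off the spectrum. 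The naive packing estimate, bounding the number of disjoint balls of a fixed radius by Bishop--Gromov once the volume is trapped in $[v,V]$, breaks down exactly when the spaces develop thin regions of small cross-section and large extent, along which many balls of small volume accumulate without raising the total volume. Ruling this out and securing a uniform $d$ is where the genuine work lies.

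Granting the uniform diameter bound, the main statement follows at once from Perelman's finiteness theorem applied to the data $(n,K,d,v)$. The final assertion is then immediate: for an isospectral family one has $N_{(X_u)}(\lambda)=N_{(X_v)}(\lambda)$ for every $\lambda$ and every pair, so \eqref{eq:equi bounded} holds trivially with $C=1$, and the conclusion just proved gives finiteness up to homeomorphism.
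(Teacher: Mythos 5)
Your reduction to Perelman's finiteness theorem follows the paper exactly: Weyl's law from Corollary~\ref{cor:weyl alex} forces a common dimension $n$ and traps all volumes $\mathcal{H}^n(X_u)$ in a fixed interval $[v,V]\subset(0,+\infty)$, and with $(n,K,d,v)$ in hand Perelman's stability theorem finishes the argument. But the uniform diameter bound, which you explicitly concede (``granting the uniform diameter bound\dots''), is a genuine gap: it is a required step of the proof, not an optional refinement, and the paper does supply it. The paper's mechanism is the heat-trace argument of \cite[Corollary~1.2]{BrooksPetersenPerry} (see also \cite[Proposition~7.4]{Stanhope}) combined with von Renesse's heat kernel comparison on Alexandrov spaces \cite[Corollary~1]{VonRenesse}: the diagonal lower bound $p(x,x,t)\ge c\,/\meas\bigl(B_{\sqrt t}(x)\bigr)$ (which in the present setting is also the left inequality in \eqref{eq:gaussian}) gives, for fixed $t_0>0$,
\begin{equation*}
Z_u(t_0)=\sum_i e^{-\lambda_i(u) t_0}=\int_{X_u} p_u(x,x,t_0)\dist\meas_u(x)\ \ge\ c\int_{X_u}\frac{\dist\meas_u(x)}{\meas_u\bigl(B_{\sqrt{t_0}}(x)\bigr)}\ \ge\ \frac{c}{D}\,M_u,
\end{equation*}
where $M_u$ is the cardinality of a maximal $2\sqrt{t_0}$-packing and $D$ is the doubling constant from Bishop--Gromov (uniform once $n$ and the curvature bound are fixed). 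Taking the packing along a near-diametral geodesic gives $M_u\ge \mathrm{diam}(X_u)/(4\sqrt{t_0})$, so $\mathrm{diam}(X_u)\lesssim Z_u(t_0)$; and the spectral hypothesis transfers the finiteness of one member's heat trace to the whole family (immediately in the isospectral case, where $Z_u\equiv Z_v$, and via the Abelian-type estimate $Z(t)=t\int_0^\infty N(\lambda)e^{-t\lambda}\dist\lambda$ of Lemma~\ref{lem:int by parts} under \eqref{eq:equi bounded}).

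This also shows that your stated reason for pessimism is misplaced. You worry that a packing estimate ``breaks down exactly when the spaces develop thin regions of small cross-section and large extent,'' but that objection applies to packing by \emph{volume}; the trace argument packs by the quantity $\int_{B}\meas\bigl(B_{\sqrt{t_0}}(x)\bigr)^{-1}\dist\meas(x)$, and each ball of a maximal packing contributes at least $1/D$ to it \emph{regardless of how small its volume is} --- precisely because small $\meas(B_{\sqrt t}(x))$ makes the diagonal heat kernel large. Likewise, your remark that the spectrum ``is blind to the diameter'' is true only of the leading Weyl coefficient: the heat trace at a fixed positive time $t_0$, which the counting function controls through the Laplace transform above, does see the diameter. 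With this step restored, the rest of your argument (dimension rigidity, two-sided volume bounds from the symmetric use of \eqref{eq:equi bounded}, and the reduction of the isospectral case to $C=1$) is correct and matches the paper.
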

\begin{proof}
By an argument similar to the proof of \cite[Corollary 1.2]{BrooksPetersenPerry} (or \cite[Proposition 7.4]{Stanhope}) with
\cite[Corollary 1]{VonRenesse} 
there exists $d>0$ such that $\mathrm{diam}\,X_{\lambda}\le d$. 
Since Weyl's law (\ref{eq:weyl ahlfors}) with (\ref{eq:equi bounded}) implies that there exist $n \in \mathbb{N}$ and $v >0$ such that $\mathrm{dim}\,X_{\lambda}\equiv n$ and $\mathcal{H}^n(X_{\lambda}) \ge v$ for any $\lambda \in \Lambda$, the topological finiteness result stated above completes the proof.
\end{proof}

\section{Appendix: refinements of Karamata's theorem}

In this section we prove Theorem~\ref{thm:karamata} and its one-sided versions mentioned in Remark~\ref{rem:tauberian}.
We follow the proofs in Theorems 10.2 and 10.3 of \cite{S}, borrowing also the terminology
``Abelian'', ``Tauberian'' from there.

Throughout this section $\nu$ is a nonnegative and $\sigma$-finite Borel measure on $[0,+\infty)$.
The results will then be applied to the case when $\nu:=\sum_i\delta_{\lambda_i}$.

\begin{lemma}\label{lem:int by parts}
For all $t>0$ one has
\begin{equation}\label{eq:int}
\int_{[0,+\infty)} e^{-tx}\dist \nu(x)=\int_0^\infty t\nu([0,y])e^{-ty}\dist y.
\end{equation}
\end{lemma}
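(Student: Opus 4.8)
The plan is to prove the identity \eqref{eq:int} by rewriting the exponential $e^{-tx}$ as an integral of $t e^{-ty}$ over $y\in[x,+\infty)$ and then exchanging the order of integration via Tonelli's theorem, which is legitimate because all integrands are nonnegative and both $\nu$ and the Lebesgue measure on $[0,+\infty)$ are $\sigma$-finite.

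First I would record the elementary identity
$$
e^{-tx}=\int_x^{+\infty} t e^{-ty}\dist y\qquad (t>0,\ x\ge 0),
$$
obtained by direct integration. Substituting this into the left-hand side of \eqref{eq:int} gives
$$
\int_{[0,+\infty)} e^{-tx}\dist\nu(x)=\int_{[0,+\infty)}\Bigl(\int_x^{+\infty} t e^{-ty}\dist y\Bigr)\dist\nu(x).
$$

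Next I would apply Tonelli's theorem to the nonnegative function $(x,y)\mapsto t e^{-ty}$ on the region $\{(x,y):0\le x\le y\}\subset[0,+\infty)\times[0,+\infty)$. The constraint $y\ge x$ appearing in the inner integral becomes, after swapping, the constraint $x\in[0,y]$ for each fixed $y$; hence the double integral equals
$$
\int_0^{+\infty} t e^{-ty}\Bigl(\int_{[0,y]}\dist\nu(x)\Bigr)\dist y=\int_0^{+\infty} t\,\nu([0,y])\,e^{-ty}\dist y,
$$
which is precisely the right-hand side of \eqref{eq:int}.

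The only point requiring care is the justification of the interchange of the two integrations, and this is the (minor) main obstacle: it is immediate here since the integrand is nonnegative and the $\sigma$-finiteness of $\nu$ guarantees that Tonelli's theorem applies, the identity holding even when both sides equal $+\infty$.
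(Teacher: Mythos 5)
Your proof is correct and is essentially the same argument as the paper's: the paper invokes Cavalieri's formula $\int e^{-tx}\dist\nu(x)=\int_0^1\nu(\{x:\ e^{-tx}\geq r\})\dist r$ and then substitutes $r=e^{-ty}$, which is exactly your Tonelli interchange over the region $\{0\le x\le y\}$ written after a change of variables, with the same identification $\{x:\ e^{-tx}\geq e^{-ty}\}=[0,y]$. Your explicit appeal to nonnegativity and the $\sigma$-finiteness of $\nu$ (a standing assumption in that section of the paper) correctly justifies the swap, including the case where both sides are infinite.
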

\begin{proof} By Cavalieri's formula and the change of variables $r=e^{-ty}$ we get
$$
\int_{[0,+\infty)} e^{-tx}\dist \nu(x)=\int_0^1\nu(\{x:\ e^{-tx}\geq r\})\dist r=
\int_0^\infty te^{-ty}\nu(\{x:\ e^{-tx}\geq e^{-ty}\})\dist y
$$
and we conclude, since $\{x:\ e^{-tx}\geq e^{-ty}\}=[0,y]$.
\end{proof}

We start with the Abelian case, easier when compared to the Tauberian one.

\begin{theorem}[Abelian theorem]
Assume that there exist $\gamma \in [0,+\infty)$ and $C \in [0,+\infty)$ such that 
\begin{equation}\label{1}
\lim_{a \to +\infty}\frac{\nu ([0, a])}{a^{\gamma}}=C.
\end{equation}
Then 
\begin{equation}\label{2}
\lim_{t \to 0^+}t^{\gamma}\int_{[0,+\infty)} e^{-tx}\dist \nu (x)=C \Gamma (\gamma +1).
\end{equation}  
More generally,
\begin{equation}\label{1bis}
\limsup_{a \to +\infty}\frac{\nu ([0, a])}{a^{\gamma}}\leq C<+\infty\quad\Longrightarrow\quad
\limsup_{t \to 0^+}t^{\gamma}\int_{[0,+\infty)} e^{-tx}\dist \nu (x)\leq C \Gamma (\gamma +1)
\end{equation}
and
\begin{equation}\label{1ter}
\liminf_{a \to +\infty}\frac{\nu ([0, a])}{a^{\gamma}}\geq c\quad\Longrightarrow\quad
\liminf_{t \to 0^+}t^{\gamma}\int_{[0,+\infty)} e^{-tx}\dist \nu (x)\geq c \Gamma (\gamma +1).
\end{equation}
\end{theorem}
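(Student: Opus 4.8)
The plan is to reduce the whole statement to the asymptotics of a single integrand, to which Fatou's lemma (for the $\liminf$) and the reverse Fatou lemma (for the $\limsup$) can be applied. Starting from Lemma~\ref{lem:int by parts} I would write
\[
t^\gamma\int_{[0,+\infty)}e^{-tx}\di\nu(x)=t^{\gamma+1}\int_0^\infty\nu([0,y])e^{-ty}\di y,
\]
and then perform the change of variables $y=s/t$ to obtain the representation
\[
t^\gamma\int_{[0,+\infty)}e^{-tx}\di\nu(x)=\int_0^\infty g_t(s)\di s,\qquad
g_t(s):=t^\gamma\nu([0,s/t])e^{-s}.
\]
For every fixed $s>0$, as $t\to 0^+$ one has $s/t\to+\infty$ and
$g_t(s)=s^\gamma\,\dfrac{\nu([0,s/t])}{(s/t)^\gamma}\,e^{-s}$, so the assumed asymptotics of $\nu([0,a])/a^\gamma$ pass directly to pointwise asymptotics of the integrand; the target constant then appears through the identity $\int_0^\infty s^\gamma e^{-s}\di s=\Gamma(\gamma+1)$.

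To prove \eqref{1ter} I would only need Fatou's lemma. Since $g_t\ge 0$ and, for every $s>0$,
\[
\liminf_{t\to 0^+}g_t(s)=s^\gamma e^{-s}\liminf_{t\to 0^+}\frac{\nu([0,s/t])}{(s/t)^\gamma}\ge c\,s^\gamma e^{-s}
\]
(because the $\liminf$ over $t\to 0^+$ is a sub-limit of $a=s/t\to+\infty$), Fatou's lemma yields
$\liminf_{t\to 0^+}\int_0^\infty g_t\ge\int_0^\infty c\,s^\gamma e^{-s}\di s=c\,\Gamma(\gamma+1)$.

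The implication \eqref{1bis} is the delicate one, and it is exactly here that the finiteness $C<+\infty$ is used: I would apply the \emph{reverse} Fatou lemma, which requires an integrable majorant of $g_t$ uniform in $t$. The key step, and the main obstacle, is to upgrade $\limsup_{a\to\infty}\nu([0,a])/a^\gamma\le C$, together with the local finiteness of $\nu$, into a global bound $\nu([0,a])\le C_2(1+a^\gamma)$ valid for \emph{all} $a\ge 0$ and some constant $C_2$: for $a$ beyond a threshold $A$ this follows from the $\limsup$ condition, while for $0\le a\le A$ it follows from $\nu([0,a])\le\nu([0,A])<+\infty$. This gives $g_t(s)\le C_2(t^\gamma+s^\gamma)e^{-s}\le C_2(1+s^\gamma)e^{-s}$ for all $t\in(0,1]$, an integrable majorant. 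Since moreover $\limsup_{t\to 0^+}g_t(s)\le C\,s^\gamma e^{-s}$ pointwise, the reverse Fatou lemma gives $\limsup_{t\to 0^+}\int_0^\infty g_t\le C\,\Gamma(\gamma+1)$.

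Finally, \eqref{2} follows by combining the two one-sided statements: hypothesis \eqref{1} says that $\limsup_a\nu([0,a])/a^\gamma=\liminf_a\nu([0,a])/a^\gamma=C$, so \eqref{1bis} and \eqref{1ter} pinch $t^\gamma\hat\nu(t)$ to the common value $C\,\Gamma(\gamma+1)$ (equivalently, one may invoke dominated convergence directly with the same majorant). Once the uniform integrable majorant for the $\limsup$ direction is in place, everything else is a routine Fatou/reverse-Fatou computation.
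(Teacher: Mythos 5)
Your proposal is correct and takes essentially the same route as the paper: both start from Lemma~\ref{lem:int by parts}, rescale the integration variable so that the exponential weight $e^{-s}$ is fixed, and then conclude by dominated convergence for \eqref{2} and by Fatou/reverse Fatou for \eqref{1ter} and \eqref{1bis}. Your global bound $\nu([0,a])\le C_2(1+a^{\gamma})$ is the same device as the paper's boundedness of $G(a):=(a+1)^{-\gamma}\nu([0,a])$, producing the identical integrable majorant of the form $\mathrm{const}\,(1+s^{\gamma})e^{-s}$.
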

\begin{proof}
Let $F(a):=\nu ([0, a])$ and $G(a):=(a+1)^{-\gamma}F(a)$.
Then (\ref{1}) yields 
\begin{equation}\label{3}
\lim_{a \to +\infty}G(a)=C.
\end{equation}
In particular $\sup_a G(a)<\infty$. Then Lemma~\ref{lem:int by parts} gives
\begin{equation}\label{4}
t^{\gamma}\int_{[0,+\infty)} e^{-tx}\dist \nu (x) = 
t^{\gamma +1}\int_0^\infty e^{-tx}(x+1)^{\gamma}G(x)\dist x=
\int_0^\infty e^{-y}(y+t)^{\gamma}G\left( y/t \right) \dist y.
\end{equation}
Since for any $t \in (0, 1]$
\begin{equation}\label{eq:333}
e^{-y}(y+t)^{\gamma}G(y/t) \le e^{-y}(y+1)^{\gamma} \sup_a G(a) \in L^1([0,+\infty)),
\end{equation}
applying the dominated convergence theorem to \eqref{4} as $t \downarrow 0$ shows \eqref{2} 
because $G(y/t) \to C$ as $t \downarrow 0$ by \eqref{3}.

The one-sided versions \eqref{1bis}, \eqref{1ter} follow by an analogous argument, 
using Fatou's lemma and noticing that in the $\limsup$ case the functions in \eqref{eq:333} are dominated
as $t\to 0^+$ by an integrable function.
\end{proof}

Now we deal with the Tauberian case.

\begin{theorem}[Tauberian theorem]
Assume that there exist $\gamma \in [0,+\infty)$ and $D \in [0,+\infty)$ such that 
\begin{equation}\label{5}
\lim_{t \to 0^+}t^{\gamma}\int_{[0,+\infty)} e^{-tx}\dist \nu (x)=D.
\end{equation}
Then
\begin{equation}\label{6}
\lim_{a \to +\infty}\frac{\nu ([0, a))}{a^{\gamma}}=\frac{D}{\Gamma (\gamma +1)}.
\end{equation}
\end{theorem}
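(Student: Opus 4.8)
The plan is to follow Karamata's method, converting the Laplace-transform asymptotics \eqref{5} into asymptotics for the distribution function $\nu([0,a))$ by testing against a well-chosen family of functions of $e^{-tx}$. Assume $\gamma>0$; the case $\gamma=0$ is immediate, since by monotone convergence $\hat\nu(t)\uparrow\nu([0,+\infty))$ as $t\to0^+$, so \eqref{5} forces $\nu([0,+\infty))=D$ and hence $\nu([0,a))\to D=D/\Gamma(1)$. The first step is to upgrade \eqref{5} to a full family of moment asymptotics: replacing $t$ by $st$ for fixed $s>0$ gives
\[
t^\gamma\int_{[0,+\infty)}e^{-stx}\di\nu(x)=s^{-\gamma}\bigl((st)^\gamma\hat\nu(st)\bigr)\longrightarrow Ds^{-\gamma}\qquad(t\to0^+).
\]
I would then introduce, for $g\in C([0,1])$, the positive linear functionals $\Lambda_t(g):=t^\gamma\int_{[0,+\infty)}g(e^{-tx})e^{-tx}\di\nu(x)$, which are integration of $g$ against the finite positive measures obtained by pushing $t^\gamma e^{-tx}\di\nu(x)$ forward under $x\mapsto e^{-tx}\in(0,1]$. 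Taking $g(r)=r^n$ (so that the integrand involves $e^{-(n+1)tx}$, i.e. $s=n+1$) the moment asymptotics show $\Lambda_t(g)\to\Lambda(g)$ for every monomial, and hence for every polynomial, where $\Lambda(g):=\tfrac{D}{\Gamma(\gamma)}\int_0^{+\infty}g(e^{-u})u^{\gamma-1}e^{-u}\di u$.

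The conceptual heart, Karamata's trick, is that this convergence then extends to \emph{all} $g\in C([0,1])$. Indeed $\Lambda_t(\mathbf 1)=t^\gamma\hat\nu(t)\to D$, so the total masses $\Lambda_t(\mathbf 1)$ are uniformly bounded; combining the elementary estimate $|\Lambda_t(g)-\Lambda_t(P)|\le\|g-P\|_\infty\,\Lambda_t(\mathbf 1)$ with the Weierstrass approximation theorem yields $\Lambda_t(g)\to\Lambda(g)$ for every continuous $g$ (note $\Lambda(\mathbf 1)=D$ and $\Lambda$ is finite precisely because $\gamma>0$). A pleasant feature is that the uniform bound $\sup_t\Lambda_t(\mathbf 1)<\infty$ comes for free from \eqref{5}, so no separate a priori estimate on $\nu([0,a])$ is needed.

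Finally I would specialise to the discontinuous function $\tilde g(r):=r^{-1}\chi_{(1/e,1]}(r)$. Writing $a:=1/t$, one checks that $\tilde g(e^{-tx})e^{-tx}=\chi_{(1/e,1]}(e^{-tx})=\chi_{[0,a)}(x)$, so $t^\gamma\nu([0,a))=\Lambda_t(\tilde g)$, while a direct computation in the variable $u=-\log r$ gives $\Lambda(\tilde g)=\tfrac{D}{\Gamma(\gamma)}\int_0^1u^{\gamma-1}\di u=D/\Gamma(\gamma+1)$. To conclude I would sandwich: $\tilde g$ is bounded (it is supported on $(1/e,1]$, where $r^{-1}\le e$) and continuous except for its jump at $r=1/e$, so for each $\delta>0$ one can pick $g_\delta^-,g_\delta^+\in C([0,1])$ with $g_\delta^-\le\tilde g\le g_\delta^+$ and $g_\delta^\pm\to\tilde g$ pointwise off $\{1/e\}$ as $\delta\to0$. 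Since $e^{-tx}\ge0$ one gets $\Lambda_t(g_\delta^-)\le t^\gamma\nu([0,a))\le\Lambda_t(g_\delta^+)$; letting $t\to0^+$ and then $\delta\to0$, and using that the limiting measure $\tfrac{D}{\Gamma(\gamma)}u^{\gamma-1}e^{-u}\di u$ carries no atom at the jump point $u=1$ (so $\Lambda(g_\delta^\pm)\to\Lambda(\tilde g)$ by dominated convergence), gives exactly \eqref{6}.

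The main obstacle is precisely this last passage from continuous test functions to the indicator-type $\tilde g$: this is where the genuinely Tauberian content lies, and the sandwiching works only because the target measure assigns no mass to the point of discontinuity. Everything before it — the moment identities and their extension to $C([0,1])$ — is essentially soft once one observes that $\Lambda_t$ are positive functionals with uniformly bounded mass.
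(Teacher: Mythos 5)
Your proof is correct and follows essentially the same route as the paper: the same scaled moment computation $t^{\gamma}\int e^{-(n+1)tx}\,\mathrm{d}\nu\to D(n+1)^{-\gamma}$, extension to all continuous test functions via Weierstrass density together with the uniform mass bound coming from \eqref{5}, and a final passage to the indicator of $[0,a)$ using that the limiting measure does not charge the discontinuity point. The only cosmetic differences are that you push the measures forward to $[0,1]$ (so the classical Weierstrass theorem replaces the paper's Stone--Weierstrass argument on $C_0([0,+\infty))$ with polynomials in $e^{-x}$), and you carry out explicitly the sandwich with continuous functions $g_\delta^{\pm}$ that the paper delegates to the standard portmanteau fact that weak convergence yields convergence on sets whose boundary is null for the limit measure.
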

\begin{proof}
If $\gamma =0$, then applying the monotone convergence theorem to (\ref{5}) shows (\ref{6}), hence
we can assume $\gamma >0$. For any $t \in (0, 1]$ let $\nu_t,\, \mu$ be Borel measures on 
$[0,+\infty)$ be respectively defined by  
\begin{equation}\label{7}
\nu_t(A):=t^{\gamma}\nu(t^{-1}A),\qquad
\mu (A):=\int_Ax^{\gamma -1}\dist x
\end{equation}
for any Borel subset $A$.
Then (\ref{6}) is equivalent to 
\begin{equation}\label{7}
\lim_{t \to 0^+}\nu_t([0, 1))=\frac{D}{\Gamma (\gamma)}\mu ([0, 1))
\end{equation}
because
\begin{equation}\label{9}
\nu_t([0, 1))=t^{\gamma}\nu ([0, t^{-1}))\quad\text{and}\quad
\mu ([0, 1))=\int_0^1x^{\gamma -1}\dist x=\frac{1}{\gamma}=\frac{\Gamma (\gamma )}{\Gamma (\gamma +1)}.
\end{equation}
In order to prove (\ref{7}), we will show
\begin{equation}\label{11}
\lim_{t \to 0^+}\int f(x)\dist \nu_t(x)=\frac{D}{\Gamma (\gamma)}\int f(x)\dist \mu(x)
\end{equation} 
for any $f \in C_c([0,+\infty))$ as follows.

Let $\hat{\nu}_t:=e^{-x}\dist \nu_t(x)$ and $\hat{\mu}:=e^{-x}\dist \mu(x)$ be the corresponding 
weighted measures on $[0,+\infty)$.
Then (\ref{5}) with Lemma~\ref{lem:int by parts} yields
\begin{equation}\label{12}
\lim_{t \to 0^+}\hat{\nu}_t([0,+\infty))=\lim_{t\to 0^+}\int e^{-x}\dist \nu_t(x) 
=\lim_{t \to 0^+}\int e^{-tx}t^{\gamma}\dist \nu (x)=\frac{D}{\Gamma (\gamma)}\hat{\mu}([0,+\infty)).
\end{equation}
In particular 
\begin{equation}\label{13}
\sup_{t<1}\hat{\nu}_t([0, +\infty))<+\infty.
\end{equation}
More strongly, (\ref{5}) yields
\begin{equation}\label{14}
\lim_{t \to 0^+}\int g(x)\dist \hat{\nu}_t(x)=\frac{D}{\Gamma (\gamma)}\int g(x)\dist \hat{\mu}(x)
\end{equation}
for any polynomial $g(x)$ in $e^{-x}$ (i.e. $g(x)=\sum_{i=1}^Na_ie^{-ix}$).
Because 
\begin{align}
\lim_{t \to 0^+}\int e^{-kx}\dist \hat{\nu}_t(x)&=\lim_{t \downarrow 0}\int e^{-(k+1)x}\dist \nu_t(x) \nonumber \\
&=\lim_{t \to 0^+}\int e^{-(k+1)tx}t^{\gamma}\dist \nu(x) \nonumber \\
&=\frac{D}{(k+1)^{\gamma}}\nonumber =\frac{D}{\Gamma (\gamma)}\int e^{-kx}\dist \hat{\mu}(x).
\end{align}

Let $C_0([0,+\infty))$ be the set of continuous functions $f$ on $[0,+\infty)$ such that $f(x) \to 0$ as $x \to +\infty$.
Then since the set of polynomials in $e^{-x}$ is dense in $C_0([0,+ \infty))$ with respect to the norm $\sup |f|$, applying the Stone-Weierstrass theorem to $(C_0([0,+\infty)), \sup |\cdot |)$ with \eqref{13} shows that \eqref{14} is satisfied for any $g \in C_0([0,+\infty))$, which implies \eqref{11}.

We are now in a position to prove (\ref{7}) by using (\ref{11}). Indeed, it is well-known that
the weak convergence implies $\nu_t(E)\to D\mu(E)/\Gamma(\gamma)$ for any compact set $E\subset [0,+\infty)$ 
with $\mu(\partial E)=0$. Choosing $E=[0,1]$ we obtain \eqref{11}.
\end{proof}

\begin{remark}\label{rem:moments}
The difficulty to obtain a one sided version out of the previous proof, as we did for the Abelian case,
can also be explained as follows: if we consider the push forward $\sigma_t$ of the measures $\hat{\nu_t}$
under the map $x\mapsto e^{-x}$, the argument above shows that all moments of all weak limit points
of $\sigma_t$ are uniquely determined. Hence, since a finite Borel measure in $[0,1]$ is uniquely determined by
its moments, uniqueness follows. If we replace the assumption \eqref{5} by a bound on the $\liminf$ or
the $\limsup$, we find only an inequality between the moments of the measures, which does not seem to imply, 
in general, the corresponding inequality for the measures.
\end{remark}

\begin{proposition}\label{prop}
Assume that for some $\gamma \in [0, +\infty)$ one has 
\begin{equation}\label{24}
\limsup_{t \to 0^+}t^{\gamma}\int_{[0,+\infty)}e^{-st}\dist\nu(s)\le C_0<+\infty.
\end{equation}
Then 
\begin{equation}\label{25}
\limsup_{\lambda \to +\infty}\frac{\nu([0,\lambda])}{\lambda^{\gamma}}\le eC_0.
\end{equation}
\end{proposition}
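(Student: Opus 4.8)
The plan is to exploit the elementary fact that, at a well-chosen value of $t$, the Laplace transform appearing in \eqref{24} controls $\nu([0,\lambda])$ from above. First I would fix $\lambda>0$ and $t>0$ and restrict the integral defining the transform to the interval $[0,\lambda]$: since $s\mapsto e^{-st}$ is decreasing, on this interval it is bounded below by its value at $s=\lambda$, which gives
$$
\int_{[0,+\infty)}e^{-st}\di\nu(s)\geq\int_{[0,\lambda]}e^{-st}\di\nu(s)\geq e^{-\lambda t}\nu([0,\lambda]).
$$
Rearranging yields the pointwise bound $\nu([0,\lambda])\leq e^{\lambda t}\int_{[0,+\infty)}e^{-st}\di\nu(s)$, valid for every $t>0$.

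The second, decisive step is the choice of $t$ that tames the exponential prefactor. Taking $t=1/\lambda$ turns $e^{\lambda t}$ into the constant $e$. Using moreover that $\lambda^{-\gamma}=t^{\gamma}$ for this choice, I obtain
$$
\frac{\nu([0,\lambda])}{\lambda^{\gamma}}\leq e\,t^{\gamma}\int_{[0,+\infty)}e^{-st}\di\nu(s),\qquad t=1/\lambda.
$$
Finally I would let $\lambda\to+\infty$, which is exactly the regime $t=1/\lambda\to 0^+$. Since $\lambda\mapsto 1/\lambda$ is a bijection of $(0,+\infty)$ onto itself, the $\limsup$ over $\lambda\to+\infty$ of the right-hand side equals $e$ times the $\limsup$ over $t\to 0^+$ of $t^{\gamma}\int_{[0,+\infty)}e^{-st}\di\nu(s)$, which by hypothesis \eqref{24} is at most $C_0$. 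This produces $\limsup_{\lambda\to+\infty}\nu([0,\lambda])/\lambda^{\gamma}\leq eC_0$, i.e. \eqref{25}.

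There is no serious obstacle here: the only ideas are the decreasing-exponential lower bound and the scaling choice $t=1/\lambda$, and the factor $e$ in \eqref{25} is precisely the price $e^{\lambda t}$ paid at $t=1/\lambda$. The single point worth stating carefully is the coupling of the two limiting regimes: because the hypothesis is phrased as a $\limsup$ as $t\to 0^+$, it is cleanest to tie $t$ to $\lambda$ through $t=1/\lambda$ so that $\lambda\to+\infty$ and $t\to 0^+$ describe the same limit, allowing the bound on the transform to pass directly to a bound on the counting quantity. (Optimizing over $t$ rather than fixing $t=1/\lambda$ could only lower the constant, but is unnecessary for the stated inequality.)
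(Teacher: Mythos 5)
Your proof is correct and follows essentially the same route as the paper: both rest on the pointwise bound $\nu([0,\lambda])\le e^{\lambda t}\int_{[0,+\infty)}e^{-st}\dist\nu(s)$ obtained by restricting the Laplace transform to $[0,\lambda]$, followed by the coupling $t=\lambda^{-1}$, which produces the factor $e$. The only cosmetic difference is that the paper phrases the final limit via an $\epsilon$-formulation of the hypothesis \eqref{24} before setting $\lambda=t^{-1}$, whereas you take the $\limsup$ directly along the path $t=1/\lambda$; these are equivalent.
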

\begin{proof}
Note that for any $\lambda>0$ and any $t>0$
\begin{equation}\label{26}
\nu([0,\lambda])\le e^{\lambda t}\int_{[0,\lambda]}e^{-st}\dist\nu(s)
\le e^{\lambda t}\int_{[0,+\infty)}e^{-st}\dist\nu(s).
\end{equation}
By \eqref{24}, for any $\epsilon>0$ there exists $t_0>0$ such that
$\int_{[0,+\infty)}e^{-st}\dist\nu(s)\le (C_0+\epsilon)t^{-\gamma}$ for any $t<t_0$.
Thus \eqref{26} yields $\nu([0,\lambda])\le e^{\lambda t}(C_0+\epsilon)t^{-\gamma}$
for any $\lambda>0$ and any $t<t_0$.
Letting $\lambda:=t^{-1}$ and then letting $t \downarrow 0$ shows \eqref{25}.
\end{proof}

\begin{proposition}
Assume that for some $\gamma \in [0,+\infty)$ one has 
\begin{equation}\label{33}
\liminf_{t \to 0^+}t^{\gamma}\int_{[0,+\infty)}e^{-st}\dist\nu(s)>0,\qquad
\limsup_{t \to 0^+}t^{\gamma}\int_{[0,+\infty)}e^{-st}\dist\nu(s)<+\infty.
\end{equation}
%
Then 
\begin{equation}\label{35}
\liminf_{\lambda \to +\infty}\frac{\nu([0,\lambda])}{\lambda^{\gamma}} >0.
\end{equation}
\end{proposition}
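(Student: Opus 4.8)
The plan is to derive the lower bound directly, with no contradiction argument, by pitting the lower bound on the Laplace transform $\hat\nu(t):=\int_{[0,+\infty)}e^{-st}\dist\nu(s)$ against an a priori polynomial \emph{upper} bound on the distribution function $F(\lambda):=\nu([0,\lambda])$. Set $L:=\liminf_{t\to 0^+}t^\gamma\hat\nu(t)>0$ and $U:=\limsup_{t\to 0^+}t^\gamma\hat\nu(t)<+\infty$, which are positive and finite by the two hypotheses in \eqref{33}. First I would invoke Proposition~\ref{prop}, applied with $C_0=U$, to obtain a constant $M>0$ and a threshold $\lambda_0$ such that $F(\lambda)\leq M\lambda^\gamma$ for all $\lambda\geq\lambda_0$; this is the only use of the $\limsup$ assumption.

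The heart of the argument is a splitting of $\hat\nu$ at the scale $R/t$, for a large parameter $R$ to be fixed later. Using Lemma~\ref{lem:int by parts} to write $\hat\nu(t)=t\int_0^\infty F(y)e^{-ty}\dist y$, I would estimate the near part by monotonicity of $F$,
$$
t\int_0^{R/t}F(y)e^{-ty}\dist y\leq F(R/t)\,t\int_0^{R/t}e^{-ty}\dist y\leq F(R/t),
$$
and the far part, for $t$ small enough that $R/t\geq\lambda_0$, by the polynomial bound followed by the change of variable $u=ty$,
$$
t\int_{R/t}^\infty F(y)e^{-ty}\dist y\leq Mt\int_{R/t}^\infty y^\gamma e^{-ty}\dist y
=Mt^{-\gamma}\int_R^\infty u^\gamma e^{-u}\dist u.
$$
Together these give $\hat\nu(t)\leq F(R/t)+Mt^{-\gamma}\int_R^\infty u^\gamma e^{-u}\dist u$.

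Finally I would absorb the tail into the lower bound. Choosing $R$ so large that $M\int_R^\infty u^\gamma e^{-u}\dist u\leq L/4$, and using $\hat\nu(t)\geq (L/2)t^{-\gamma}$ for all sufficiently small $t$, the last inequality yields $F(R/t)\geq (L/4)t^{-\gamma}$. Substituting $\lambda=R/t$ (a bijection of a punctured right neighbourhood of $0$ onto a neighbourhood of $+\infty$) gives $F(\lambda)\geq (L/4R^\gamma)\lambda^\gamma$ for all large $\lambda$, whence $\liminf_{\lambda\to+\infty}\nu([0,\lambda])/\lambda^\gamma\geq L/(4R^\gamma)>0$, which is \eqref{35}.

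The main obstacle, and the reason the clean moment argument behind the two-sided Tauberian theorem fails here (cf.\ Remark~\ref{rem:moments}), is that the lower bound on $\hat\nu(t)$ by itself does not localize any mass in $[0,R/t]$: the tail $\int_{(R/t,+\infty)}e^{-st}\dist\nu(s)$ is a priori comparable to $t^{-\gamma}$ and could in principle account for the entire lower bound. The decisive point is therefore that the second hypothesis, through $F(\lambda)\leq M\lambda^\gamma$, forces this tail to be at most $Mt^{-\gamma}\int_R^\infty u^\gamma e^{-u}\dist u$, a quantity that becomes arbitrarily small relative to $Lt^{-\gamma}$ once $R$ is large; this is exactly what compels the near part $F(R/t)$ to carry a definite fraction of the lower bound, and it is why both conditions in \eqref{33} are genuinely needed.
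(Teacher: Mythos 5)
Your proof is correct, and it shares its skeleton with the paper's: both arguments route the $\limsup$ hypothesis through Proposition~\ref{prop} to obtain the a priori bound $\nu([0,\lambda])\le M\lambda^{\gamma}$ for large $\lambda$, then split the transform at a scale proportional to $1/t$ and absorb the far contribution by taking the cutoff large. The implementations differ, though. The paper stays on the measure side: it partitions $[0,+\infty)$ into blocks $(\ell\lambda,(\ell+1)\lambda]$ with $\lambda=t^{-1}$, obtains $t^{\gamma}\hat\nu(t)\le t^{\gamma}\sum_{\ell\ge 0}e^{-\ell}\nu([0,(\ell+1)/t])$, truncates the series at some $k_0$ using $\sum_{\ell>k_0}e^{-\ell}(\ell+1)^{\gamma}<\delta$, and reads off the lower bound from the remaining finite sum, which must be dominated by a multiple of $\nu([0,(k_0+1)/t])$. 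You instead pass to the distribution function $F$ via Lemma~\ref{lem:int by parts} and make a single two-piece split at $R/t$: monotonicity gives the near part $\le F(R/t)$ outright (indeed $t\int_0^{R/t}e^{-ty}\di y=1-e^{-R}\le 1$), and the change of variable $u=ty$ turns the far part into $Mt^{-\gamma}\int_R^{\infty}u^{\gamma}e^{-u}\di u$, which tends to $0$ as $R\to\infty$ since $\Gamma(\gamma+1)<+\infty$. This buys a cleaner near-part estimate (a single value of $F$ rather than a truncated series) and an explicit final constant $L/(4R^{\gamma})$ in \eqref{35}, at the cost of invoking Lemma~\ref{lem:int by parts}, which the paper's proof of this proposition does not use. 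The bookkeeping is handled correctly: the far-part bound needs $R/t\ge\lambda_0$, which holds for all small $t$; the lower bound $\hat\nu(t)\ge(L/2)t^{-\gamma}$ for small $t$ is a legitimate use of the $\liminf$; and the argument covers $\gamma=0$. Your closing diagnosis of why both halves of \eqref{33} are indispensable is exactly the point of the paper's Remark~\ref{rem:moments}.
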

\begin{proof} Call $C_0>0$ the $\liminf$ and $C_1<+\infty$ the $\limsup$ in \eqref{33}.
Note that for any $\lambda >0$ and any $t>0$
\begin{align*}
\int_{[0,+\infty)}e^{-st}\dist\nu(s)&=\int_{[0,\lambda]}e^{-st}\dist\nu(s) + 
\sum_{\ell=1}^{\infty}\int_{(\ell\lambda,(\ell+1)\lambda]}e^{-st}\dist\nu(s) \\
&\le \nu([0,\lambda])+ \sum_{\ell=1}^{\infty}e^{-\ell\lambda t}\nu([0,(\ell+1)\lambda]) \\
&=  \sum_{\ell=0}^{\infty}e^{-\ell\lambda t}\nu([0,(\ell+1)\lambda]).
\end{align*}
In particular, letting $\lambda:=t^{-1}$ yields
\begin{equation}\label{36}
t^{\gamma}\int_{[0,+\infty)}e^{-st}\dist\nu(s)\le t^{\gamma}\sum_{\ell=0}^{\infty}e^{-\ell}\nu([0,\frac{\ell+1}{t}]).
\end{equation}
Thus there exists $t_0>0$ such that for any $t<t_0$
\begin{equation}\label{37}
0<\frac{C_0}{2} \le t^{\gamma}\sum_{\ell=0}^{\infty}e^{-\ell}\nu([0,\frac{\ell+1}{t}]).
\end{equation}
Next let us discuss the right hand side of (\ref{37}). By \eqref{33} and Proposition~\ref{prop} there exists $\hat{\lambda}>0$ such that $\nu([0,\lambda])\le (eC_1+1)\lambda^{\gamma}$ for any $\lambda \ge \hat{\lambda}$.
Thus for any $t>0$ with $t^{-1} \ge \hat{\lambda}$ we get
$$
\nu([0,\frac{\ell+1}{t}])\le (eC_1+1) \frac{(\ell+1)^{\gamma}}{t^{\gamma}}.
$$
In particular 
\begin{equation}\label{39}
t^{\gamma}\sum_{\ell=k}^{\infty} e^{-\ell}\nu([0,\frac{\ell+1}{t}])\le 
(eC_1+1)\sum_{\ell=k}^{\infty}e^{-\ell}(\ell+1)^{\gamma}
\end{equation}
for any $k \in \mathbb{N}$ and any $t>0$ with $t^{-1}\ge \hat{\lambda}$.

For any $\delta >0$ there exists $k_0 \in \mathbb{N}$ such that 
$\sum_{\ell=k_0+1}^{\infty}e^{-\ell}(\ell+1)^{\gamma}<\delta$.
Then, combining \eqref{37} with \eqref{39} yields
\begin{equation}
0<\frac{C_0}2<t^{\gamma}\sum_{\ell=0}^{k_0}e^{-\ell} \nu([0,\frac{\ell+1}{t}])+(eC_1+1)\delta
\end{equation}
for any $t>0$ with $t<t_0$ and $t^{-1} \ge \hat{\lambda}$, which easily shows \eqref{35}
choosing $\delta>0$ so small that $(eC_1+1)\delta<C_0/2$.
\end{proof}

\end{document}